\newcommand*{\rom}[1]{\expandafter\@slowromancap\romannumeral {\sharp}1@}
\theoremstyle{definition}
\newtheorem{fact}{fact}
\newtheorem{thm}[fact]{Theorem}
\newtheorem{lemma}[fact]{Lemma}
\newtheorem{prop}[fact]{Proposition}
\newtheorem{corollary}[fact]{Corollary}
\newtheorem{defini}[fact]{Definition}
\numberwithin{equation}{section}
\numberwithin{fact}{section}
\title{A Note on Clockability for Ordinal Turing Machines}
\author{Merlin Carl}
\date{}
\begin{document}

\maketitle

\begin{abstract}
	We study clockability for Ordinal Turing Machines (OTMs). In particular, we show that, in contrast to the situation for ITTMs, admissible ordinals can be OTM-clockable, that $\Sigma_{2}$-admissible ordinals are never OTM-clockable and that gaps in the OTM-clockable ordinals are always started by admissible limits of admissible ordinals. 
\end{abstract}

\section{Introduction}

In ordinal computability, "clockability" denotes the property of an ordinal that it is the halting time of some program. The term was introduced in \cite{HL}, which was the paper that started the area of ordinal computability by introducing Infinite Time Turing Machines (ITTMs). By now, a lot is known about clockability for ITTMs. To give a few examples: In \cite{HL}, it was proved that there are gaps in the ITTM-clockable ordinals, i.e., there are ordinals $\alpha<\beta<\gamma$ such that $\alpha$ and $\gamma$ are ITTM-clockable, but $\beta$ is not. Moreover, it is known that no admissible ordinal is ITTM-clockable (Hamkins and Lewis, \cite{HL}), that the first ordinal in a gap is always admissibles (Welch, \cite{W}), that the supremum $\lambda$ of the ITTM-writable ordinals (i.e. ordinals coded by a real number that is the output of some halting ITTM-computation) equals supremum of the ITTM-clockable ordinals (Welch, \cite{W}) and that ITTM-writable ordinals have real codes that are ITTM-writable at the point the next clockable appears. Moreover, it is known that not every ITTM-admissible below $\lambda$ starts a gap, there are admissibles properly inside gaps, and occasinally many of them (Carl, Durand, Lafitte, Ouazzani, \cite{CDLO}). And indeed, clockability turned out to be a central topic in ordinal computability; it was, for example, crucial for Welch's analysis of the computational strength of ITTMs. 

Besides ITTMs, clockability was also considered for Infinite Time Register Machines (ITRMs), where the picture turned out to be quite different: In particular, there are no gaps in the ITRM-clockable ordinals (see \cite{CFKMNW}), and in fact, the ITRM-clockable ordinals are exactly those below $\omega_{\omega}^{\text{CK}}$, which thus includes $\omega_{n}^{\text{CK}}$ for every $n\in\omega$, i.e. the first $\omega$ many admissible ordinals.


For other models, clockability received comparably little attention. This work arose out of a question of T. Kihara during the CTFM (International  Conference on Computability Theory and Foundations of Mathematics) conference in 2019 in Wuhan who, after hearing that admissible ordinals are never ITTM-clockable, asked whether the same holds for OTMs. After most of the results of this paper had been proved, we found two questions in the report of the $2007$ BIWOC (Bonn International Workshop on Ordinal Computability) \cite{BIWOC} concering this topic: the first (p. 42, question 9), the first, due to J. Reitz, asking whether $\omega_{1}^{\text{CK}}$ was OTM-clockable, the second, due to J. Hamkins, whether gap-starting ordinals for OTMs can be characterized as something stronger than being admissible. Both are considered to be answered by the claim that no admissible ordinal is OTM-clockable, which is attributed to J. Reitz and S. Warner. Upon personal inquiry, Reitz told us that they had a sketch of a proof which, however, did not entirely work; what it does show with a few modifications, though, is that $\Sigma_{2}$-admissible ordinals are not OTM-clockable, and the argument that Reitz sketched in personal correspondence to us in fact resembles the one of Theorem \ref{sigma2 not otm clockable} below. We thus regard Reitz and Warner as the first discoverers of this theorem. Both the argument of Reitz and Warner from 2007 and the one we found during the CTFM in 2019 are adaptations of Welch's argument that admissible ordinals are not ITTM-clockable.

The statement actually made in BIWOC, is, however, false: As we will show below, $\omega_{n}^{\text{CK}}$ is OTM-clockable for any $n\in\omega$. Thus, there are plenty of admissible ordinals that are OTM-clockable, and the answer to the first question is positive. The idea is to use the ITRM-clockability of these ordinals, which follows from Theorem [no gaps] in \cite{CFKMNW}, together with a slightly modified version of the obvious procedure for simulating ITRMs on OTMs. This actually shows that $\omega_{n}^{\text{CK}}$ is clockable on an ITTM with tape length $\alpha$ as soon as $\alpha>\omega$. Thus, the strong connection between admissibility and clockability seems to depend rather strongly on the details of the ITTM-architecture. We remark that this is a good example of how the studies of different models of infinitary computability can fruitfully interact: At least for us, it would not have been possible to find this result while only focusing on OTMs. 

Moreover, we will answer the second question in the positive as well by showing that, if $\alpha$ starts a gap in the OTM-clockable ordinals, then $\alpha$ is an admissible limit of admissible ordinals.

Of course, the space between "admissible limit of admissible ordinals" and "$\Sigma_{2}$-admissible" is rather broad. In particular, we do not know whether every gap starting ordinals for OTMs is $\Sigma_{2}$-admissible, though we conjecture this to be false.

\section{Ordinal Turing Machines}

Ordinal Turing Machines (OTMs) were introduced by Koepke in \cite{Koe1} as a kind of "symmetrization" of ITTMs: Instead of having a tape of length $\omega$ and the whole class of ordinals as their working time, OTMs have a tape of proper class length $\text{On}$ while retaining $\text{On}$ as their "working time" structure. We refer to \cite{Koe1} for details. 

In contrast to Koepke's definition but in closer analogy with the setup of ITTMs, we allow finitely many tapes instead of a single one. Though models of ordinal computability generally enjoy a good degree of stability under such variations as far as computational strength is concerned, this often makes a difference when it comes to clockability. Intuitively, simulating several tapes with separate read-write-heads on a single tape requires one to check the various head positions to determine whether the simulated machine has halted, which leads to a delay in halting. For ITTMs, this is e.g. demonstrated in \cite{SH}. For OTMs, insisting on a single tape would lead to a theory that is "morally" the same as the one described here, but make the results much less compelling and the proofs more technically involved and harder to follow. Thus, allowing multiple tapes seems to be a good idea. 

The following picture of OTM-computations may be useful to some readers: Let us imagine the tape split into $\omega$-block. Then an OTM-computation proceeds like this: The head works for a bit in one $\omega$-block, then leaves it to the right, works for a bit in the new $\omega$-portion, again leaves it to the right and so on, until eventually the computation either halts or the head is moved back from a limit position, i.e., goes back to $0$ and starts over. Thus, if one imagines an $\omega$-portion as single point, then the head moves from left to right, jumps back to $0$, moves right again etc. Moreover, in each $\omega$-portion, we have a classical ITTM-computation (up to the limit rules for the head position and the inner state, which make little difference).



We fix some terminology for the rest of this paper.

\begin{defini}
	If $M$ is one of ITRM, ITTM or OTM and $\alpha$ is an ordinal, then $\alpha$ is called $M$-clockable if and only if there is an $M$-program that halts at time $\alpha+1$.\footnote{The $+1$ allows limit ordinals to appear as haltling times and thus simplifies the theory.} $\alpha$ is called $M$-writable if and only if there is a real number coding $\alpha$ that is $M$-computable. An $M$-clockable gap is an interval $[\alpha,\beta)$ of ordinals such that $\alpha<\beta$, no element of $[\alpha,\beta)$ is $M$-clockable and $[\alpha,\beta)$ is maximal in the sense that there are cofinally many $M$-clockable ordinals below $\alpha$ and $\beta$ is $M$-clockable. In this case, we say that $\alpha$ "starts" the gap and call $\alpha$ a "gap starting ordinal" or "gap starter" for $M$.
\end{defini}

\section{Basic observations}

We start with some useful observations that can mostly be obtained by easy adaptations of the corresponding results about ITTM-clockability.


We start by noting that the analogue of the speedup-theorem for ITTMs from \cite{HL} holds for multitape-OTMs. As the proof -- an adaptation of the argument the speedup-theorems for ITTMs -is somewhat messy and the statement is not needed in this paper, we merely sketch the proof. (The main difference is that, in contrast to ITTMs, OTMs do not have their head on position $0$ at every limit time and that the head may make long "jumps" when moved to the left from a limit position. 
This generates a few extra complications.) To simplify the proof, we start by building up a few preliminaries.

For the ITTM-speedup, the following compactnes property is used: If $P$ halts in $\delta+n$ many steps and the head is located at position $k$ at time $\delta$, then only the $n$ cells contents before and after the $k$th one at time $\delta$ are relevant for this. Now, this is a fixed string $s$ of $2n$ bits. In \cite{HL}, a construction is described that achieves that the information whether these $2n$ cells currently contain $s$ at a limit time $\gamma$ is coded on some extra tapes at time $\gamma$. Due to the special limit rules for ITTMs that set the head back to position $0$ at every limit time, the Hamkins-Lewis-proof has this information stored at the initial tape cells, but the construction is easily modified to store the respective information on any other tape position. 

We will use it in the following way: Suppose that $P$ is an OTM-program that halts at time $\delta+n$, where $\delta$ is a limit ordinal and $n\in\omega$. We want to "speed up" $P$ by $n$ steps, i.e. to come up with a program $Q$ that halts in $\delta$ many steps. Suppose that $P$ halts with the head on position $\gamma+k$, where $\gamma$ is a limit ordinal and $k\in\omega$. $m$ be $k-n$ if $k-n\geq 0$ and $0$, otherwise, and let $s$ be the bit string present on positions $\gamma+m$ until $\gamma+k+n$ at time $\delta$. Then we use the Hamkins-Lewis-construction to take care that the information whether the bit string present on positions $\eta+m$ until $\eta+k+n$ is equal to $s$ on the $\eta+k$th cells of three extra tapes, for each limit ordinal $\eta$.

An extra complication arises from the possibility of a "setback": Within the $n$ steps from time $\delta$ to time $\delta+n$, it may happen that the head is moved left from position $\delta$, thus ending up at the start of the tape. Clearly, it will then take $<n$ many further steps at the start of the tape and only consider the first $n$ bits during this time. However, we need to know what these bits are - or rather, whether they are the "right ones", i.e., the ones present at time $\delta$ - while our head is located at position $\delta+k$. The idea is then to store this information in the inner state of the sped-up program. We thus create extra states: The new state $2i$ will represent the old state $i$ together with the information that the first $n$ bits where the "right ones" (i.e. the same ones as at time $\delta$) and $2i+1$ will represent the old state $i$ together with the information that some of these bits deviated from the one at time $\delta$. To achieve this, we use an extra tape $T_{4}$. At the start of $Q$, a $1$ is written to each of the first $n$ cells of $T_{4}$; after that the, head is set back to position $0$ and then moved along with the head of $P$. In this way, we will always know whether the head of $P$ is currently located at one of the first $n$ cells. Whenever this is the case, we insert some intermediate steps to read out the first $n$ bits, update the inner state and move the head back to its original position. (This requires some additional states, but we skip the details.) Note that, if $\eta$ is a limit time and the first $n$ bits have been changed unboundedly often below $\eta$, then the head will be located at one of these positions at time $\eta$ by the liminf-rule and thus, a further update will take place so that the state will correctly represent the configuration afterwards. On the other hand, if the first $n$ bits were only changed boundedly often before time $\eta$, then let $\bar{\eta}$ be the supremum of these times. We just saw that the state will represent the configuration correctly finitely many steps after time $\bar{\eta}$, after which the first $n$ cell contents remain unchanged, so that the state is still correct at time $\eta$.

In the following construction, we will need to know whether the head is currently located at a cell the index of which is of the form $\delta+k$, where $\delta$ is a limit ordinal and $k$ is a natural number. To achieve this, we add three tapes $T_0$, $T_1$ and $T_2$ to $P$. The tape $T_{0}$ serves as a flag: By having two cells with alternating contents $01$ and $01$, we can detect a limit time as a time at which both cells contain $0$. On $T_2$, we move the head along with the head on $P$ and place a $1$ on a cell whenever we encounter a cell on which a $0$ is written. Thus, the head occupies a certain limit position for the first time if and only if the head on $T_{1}$ reads a $0$ at a limit time. Finally, on $T_{2}$, we more the head along with the heads on $T_{1}$ and the main tape. Whenever the head on $T_{1}$ reads a $0$ at a limit time, we interrupt the computation, move the head on $T_{2}$ for $k$ many steps to the right, write a $1$, move the head $k$ many places to the left, and continue. In this way, the head on $T_{2}$ will read a $1$ if and only if the head on the main tape is at a position of the desired form. As this merely inserts finitely many steps occasionally, running this procedure along with an OTM-program $P$ will still carry out $\delta$ many steps of $P$ at time $\delta$ whenever $\delta$ is a limit ordinal. We will say that the head is "at a $\delta+k$-position" if the index of the cell where it is currently located is of this form with $\delta$ a limit ordinal and, by the construction just described, we can use formulations like "if the head is currently at a $\delta+k$-position" without affecting the running time at limit ordinals.

\begin{lemma}{\label{OTM speedup}}
	If $\alpha+n$ is OTM-clockable and $n\in\omega$, then $\alpha$ is OTM-clockable.
\end{lemma}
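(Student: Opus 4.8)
The plan is to reduce to the case where $\alpha$ is a limit ordinal and then put together the three auxiliary routines set up above. First I would dispose of the easy reductions: finite ordinals are trivially OTM-clockable (a program running through finitely many distinct states and then halting works), and the OTM-clockable ordinals are closed under adding a natural number, since from a program halting at time $\beta+1$ one gets a program halting at time $\beta+2$ by inserting a single idle state immediately before it halts. Hence, writing $\alpha=\lambda+m$ with $\lambda$ a limit ordinal and $m\in\omega$, it suffices to treat the case where $\alpha$ is a limit ordinal: the limit case (applied with the finite offset $m+n$) gives that $\lambda$ is OTM-clockable from the hypothesis that $\alpha+n=\lambda+(m+n)$ is, and adding $m$ then makes $\alpha$ OTM-clockable. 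So from now on I assume $\alpha$ is a limit ordinal and fix a program $P$ halting at time $\alpha+n+1$.

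Next I would build the sped-up program $Q$. Let $C$ be the configuration of $P$ at time $\alpha$, with inner state $q^{\ast}$; from $C$ the machine $P$ runs exactly $n+1$ more steps and then enters its halting state, and by the compactness property recalled above whether this happens depends only on $q^{\ast}$ and the contents of the at most $2(n+1)$ cells the head can reach in $n+1$ steps --- and, if a setback occurs during these steps, additionally the contents of the first $n+1$ cells of the main tape. Let $s$ be the corresponding finite bit string in $C$. The program $Q$ faithfully simulates $P$ while running, in parallel, the three routines described above: the $\delta+k$-position detector on tapes $T_0,T_1,T_2$; the Hamkins-Lewis routine that, for every limit ordinal $\eta$, records on extra tapes at time $\eta$ whether the relevant block of cells around the head currently carries the string $s$; and the setback routine (tape $T_4$, doubled states $2i$ and $2i+1$) that keeps in the inner state whether the first $n+1$ cells currently hold their values from $s$. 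As explained above, each of these routines inserts only finitely many steps between consecutive limit times, so at every limit time the machine $Q$ has performed exactly that many steps of $P$ and the recorded flags are correct. Finally, $Q$ halts as soon as the simulated inner state equals $q^{\ast}$, the flag says the string around the head is $s$, and the setback status agrees with $C$.

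For correctness I would argue as follows. If $Q$'s halting condition holds while the simulation sits at $P$'s configuration at some simulated time $t$, then that configuration agrees with $C$ on all the finitely much data that decides halting within $n+1$ steps, so $P$ is in its halting state at time $t+n+1$; since $P$ first reaches its halting state at time $\alpha+n+1$, this forces $t\ge\alpha$, so the condition never holds for a simulated time $t<\alpha$ --- here it is essential that $\alpha$ is a limit, because between limit times the simulation may lag, but a lagging simulated time is still $<\alpha$. Conversely, at the limit time $\alpha$ the auxiliary routines guarantee that $Q$ has performed exactly $\alpha$ steps of $P$, so the simulated configuration is exactly $C$, all flags are correct, and the halting condition fires; hence $Q$ enters its halting state at time $\alpha+1$, which is what we want.

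The hard part --- and the reason this would only be sketched --- is the auxiliary machinery together with the check that it behaves correctly at limits: since the OTM head need not be at position $0$ at limit times, the target string has to be tracked relative to moving $\delta+k$-positions, and a setback during the last $n+1$ steps can hurl the head back to the start of the tape, which is why the extra bit about the first $n+1$ cells must live in the inner state and behave well under the liminf rule. One must verify that the three routines can be run simultaneously without disturbing the count of $P$-steps at limit times and with jointly correct flags at every limit time, in particular at $\alpha$; this compatibility is the genuine technical work.
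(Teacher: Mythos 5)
Your proposal is correct and takes essentially the same route as the paper: reduce to the case of $\alpha$ a limit ordinal, then combine the $\delta+k$-position detector, the Hamkins--Lewis flag routine, and the setback bookkeeping in the inner state to recognize, at each step, whether the current configuration is one from which $P$ halts after exactly $n$ (or $n+1$) more steps. Your correctness argument is if anything slightly more careful than the paper's, in that you explicitly note that the simulated time may lag behind the actual time between limit stages but that equality at limit stages (and in particular at $\alpha$) is exactly what makes the halting condition fire at the right moment.
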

\begin{proof}
It is clear that finite ordinals are OTM-clockable and that OTM-clockable ordinals are closed under addition (by simply running one program after the other).\footnote{It is folklore (and easy to see) that, for any reasonable model of computation, clockable ordinals are closed under ordinal arithmetic, i.e. under addition, multiplication and exponentiation, see e.g. \cite{HL} or \cite{CFKMNW}. This also holds true for OTMs.} Thus, if $\alpha$ is clockable, then so is $\alpha+m$ for any finite $m$ and hence it suffices to consider the case that $\alpha$ is a limit ordinal. Moreover, we assume for simplicity that $P$ uses only one tape; if $P$ uses several tapes, the construction below is carried out for each of these.

Let $P$ be an OTM-program that runs for $\alpha+n$ many steps, where $\alpha$ is a limit ordinal. We want to construct a program $Q$ that runs for $\alpha$ many steps. Let the head position at time $\alpha$ be equal to $\delta+k$, where $\delta$ is a limit ordinal and $k\in\omega$. As above, let $m$ be $k-n$ if $k-n\geq 0$ and otherwise let $m=0$. Let $s$ be the bit string present on the positions $\delta+m$ until $\delta+k+n$ at time $\alpha$, and let $t$ be the string present on the first $n$ positions. 

Using the constructions explained above, $Q$ now works as follows: Run $P$. At each step, determine whether the head is currently at a location of the form $\eta+k$ with $\eta$ a limit ordinal and whether one of the two following conditions holds:

\begin{enumerate}
	\item The head is currently at one of the first $n$ positions and the bit string currently present on the positions $\eta+m$ up to $\eta+k+n$ is equal to $s$.
	\item The head is currently not on one of the first $n$ positions, the bit string currently present on the positions $\eta+m$ up to $\eta+k+n$ is equal to $s$ and whether the bit string currently present on the first $n$ positions is equal to $t$.
\end{enumerate} 

  If not, continue with $P$. Otherwise, halt. As described above, the necessary information can be read off from the various extra tapes and the inner state simultaneously. Now it is clear that, if $Q$ halts at time $\beta$, then $P$ will halt at time $\beta+n$. Thus, $Q$ halts at time $\alpha$, as desired.

\end{proof}

\begin{lemma}{\label{OTM multitape simulation}}
	Suppose that $\alpha$ is exponentially closed and 
	clockable by an OTM. Then
	$\alpha\cdot 2$ is clockable by an OTM using only one tape.
\end{lemma}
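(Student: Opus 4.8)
The plan is to turn a $k$-tape OTM-program $P$ that halts at time $\alpha+1$ into a \emph{single}-tape program $Q$ that halts at time $\alpha\cdot 2+1$. The finite cases being trivial, I assume $\alpha$ is infinite, so (being exponentially closed) it is a limit ordinal closed under ordinal multiplication, and in particular $w\cdot\alpha=\alpha$ for every $w\in\omega$. I cut the single tape of $Q$ into consecutive columns of a fixed finite width $w$: column $\gamma$ holds, on a data track, the $\gamma$-th cell of each of $P$'s $k$ tapes, it carries for each head of $P$ a position track, it carries one further position track for an auxiliary timer head, and a little scratch room. The one design decision that makes limit stages work is that the position of head $i$ is not recorded by a single marker but in ``unary boundary form'': the position track of head $i$ has a $1$ in column $\gamma$ exactly when $h_i>\gamma$. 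This is precisely the encoding that commutes with the liminf-rule, for since $h_i(\mu)=\liminf_{t\to\mu}h_i(t)$ at a limit $\mu$, the liminf of the bit ``$h_i(t)>\gamma$'' is the bit ``$h_i(\mu)>\gamma$'', so the boundary of track $i$ (its least $0$-column) is automatically $h_i(\mu)$ at every limit stage. Similarly $P$'s state is kept in $Q$'s finite control, with the states numbered so that ``begin a fresh simulated step'' is state $0$ and all auxiliary states are larger, which makes the liminf of $Q$'s control at a limit stage the correct $P$-state. Finally, the timer head is prescribed to move one cell to the right at every step and its position track sits at column-offset $0$; so after $t$ simulated steps it is at position $t$ and its boundary lies at real cell $w\cdot t$ with no finite offset, and it is always at least as far right as every genuine head of $P$ (which at time $t$ is at a position $\le t$).

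To simulate the step $t\to t+1$, $Q$ sweeps rightward from column $0$, reading every cell, until it reads the first $0$ on the offset-$0$ (timer) track, which sits at real cell $w\cdot t$; by then it has passed every column $<t$, so it has recorded in its finite control every head boundary lying strictly left of the timer boundary together with the bit that head reads, and any head whose track has shown no $0$ so far must be at position exactly $t$ (its boundary lies in some column $\ge t$, but $h_i\le t$) and therefore reads a blank, since no head of $P$ has visited column $t$ before time $t$. With all head bits and $P$'s state in hand, $Q$ evaluates $P$'s (hard-wired) transition; if it is not a halting transition, $Q$ sweeps back to column $0$, performing the at most $k$ single-cell overwrites that update the position and data tracks — each overwrite landing on the cell's final value, so that the data and position tracks never hold anything spurious — advancing the timer track by one cell, and re-entering state $0$ at column $0$. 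One verifies by induction that at the $Q$-stage $\sigma_t:=\sum_{s<t}(\text{cost of step }s)$ the configuration of $Q$ codes faithfully the configuration of $P$ at time $t$, with $Q$ at column $0$ in state $0$; at limit $t$ this invokes the liminf-compatibility of the position encoding, of the state numbering, of the discipline of returning to column $0$ between steps, and of a mod-$w$ counter that $Q$ keeps in order to recognise column offsets (a periodic counter, whose liminf is $0$ at every limit stage, matching the fact that genuine limit cells have offset $0$).

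Next comes the timing estimate. The cost of step $t$ is bounded by $t\cdot 2+c$ for a fixed finite $c$ when $t$ is infinite (out to the timer boundary at distance $\approx t$, back, plus bounded bookkeeping), and is finite when $t$ is finite; in all cases it is $<\alpha$. Hence for $\beta<\alpha$ the partial sum $\sigma_\beta=\sum_{t<\beta}(\text{cost of step }t)$ is dominated by a product of two ordinals $<\alpha$, so $\sigma_\beta<\alpha$ by multiplicative closure of $\alpha$; and since every step costs at least $1$, $\sigma_\alpha=\sum_{t<\alpha}(\text{cost of step }t)=\alpha$ exactly. Therefore, at $Q$-stage $\sigma_\alpha=\alpha$, $Q$ holds $P$'s configuration at time $\alpha$, sits at column $0$ in state $0$, and the timer head is at position $\alpha$, so its boundary lies at real cell $w\cdot\alpha=\alpha$, a genuine limit cell. $Q$ now simulates $P$'s final step, sweeping rightward one cell per step, and it reaches real cell $\alpha$ precisely at $Q$-stage $\alpha+\alpha=\alpha\cdot 2$; there it reads the $0$ marking the timer boundary, and by the rule above it now has every head bit (a blank for any head sitting at $\alpha$) and $P$'s state, so it recognises $P$'s halting transition and halts one step later, at time $\alpha\cdot 2+1$, as required.

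The main obstacle is the limit-stage bookkeeping just sketched: making the encoding — unary head positions, ordered control states, disciplined head motion, no premature writes to the real tracks, a liminf-stable offset counter — robust enough that the configuration of $Q$ at every limit stage is provably the intended one. The one genuinely ``timed'' ingredient is the combination that makes the count come out to exactly $\alpha\cdot 2+1$: the (exponential, indeed merely multiplicative) closure of $\alpha$ forces $\sigma_\alpha=\alpha$ on the nose, and placing the timer track at offset $0$ forces the final rightward sweep to stop on the limit cell $\alpha$ rather than on $\alpha$ plus a finite error. By comparison the step-by-step simulation itself, and the closure facts about clockable ordinals already used for Lemma~\ref{OTM speedup}, are routine.
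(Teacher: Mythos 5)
Your proposal is correct and follows essentially the same approach as the paper's (very brief) sketch: partition the single tape into finite-width columns carrying data and position tracks, encode each simulated head position in unary boundary form so that the liminf-rule preserves the encoding, sweep in and out once per simulated step while returning to cell $0$ (so multiplicative closure of $\alpha$ gives arrival at the stage-$\alpha$ configuration at time exactly $\alpha$), and then spend a final sweep of length $\alpha$ to reach the halting head position and detect the halt. The paper leaves all of this as a one-paragraph sketch; your write-up simply supplies the bookkeeping (timer track, mod-$w$ offset counter, state-$0$ discipline at limits) that the sketch omits.
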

\begin{proof}
	Usual simulation with various tape portions. Work in stages: simulate the current step on each tape, then set the head back to position $0$, start the next phase. Use markers on extra tape portions to represent head positions (fill with $1$s up to head position, then with $0$s).
	
	Exponential closure guarantees that this is possible in "real time", i.e. time $\alpha$. (This probably only requires multiplicative closure.) However, we need extra steps to see whether the halting configuration was assumed. For this, we potentially need to retrieve the head position of the simulated computation, which may take up to $\alpha$ many further steps.
\end{proof}

\begin{defini}
	Let $\sigma$ be the minimal ordinal such that $L_{\sigma}\prec_{\Sigma_{1}}L$.
\end{defini}

\begin{prop}{\label{below sigma}}
	Every OTM-clockable ordinals is $<\sigma$, and their supremum is $\sigma$.
\end{prop}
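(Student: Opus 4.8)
The plan is to prove two containments. First, every OTM-clockable ordinal is $<\sigma$. Suppose $P$ is an OTM-program that halts at time $\alpha+1$. The key point is that the halting problem "does $P$ halt?" is $\Sigma_1$ over $L$: $P$ halts iff there exists an ordinal $\beta$ and a (set-sized) partial computation of $P$ of length $\beta$ ending in the halting state. More precisely, an initial segment of an OTM-computation of any set length $\beta$ is itself a set coded in $L_{\beta'}$ for $\beta'$ not much bigger than $\beta$ (the cells used by the head up to time $\beta$ have index $<\beta'$, so the snapshot sequence is a genuine set), and the assertion "this set is a legal partial computation of $P$ halting at its last step" is arithmetic in that set, hence $\Sigma_1$. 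So "$P$ halts" is a $\Sigma_1$ sentence (with the single parameter $P\in\omega$) true in $L$. Since $L_\sigma\prec_{\Sigma_1}L$, this sentence is true in $L_\sigma$, i.e. there is a halting computation of $P$ coded inside $L_\sigma$; in particular its length, which is $\alpha+1$, is an ordinal of $L_\sigma$, so $\alpha+1<\sigma$ and hence $\alpha<\sigma$.

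Second, the supremum of the OTM-clockable ordinals is exactly $\sigma$. One direction is immediate from the first part. For the reverse inequality, let $\lambda$ denote this supremum; we must show $\lambda\geq\sigma$, i.e. $\lambda=\sigma$. Suppose toward a contradiction that $\lambda<\sigma$. Then $L_\lambda$ is not $\Sigma_1$-elementary in $L$, so there is a $\Sigma_1$ sentence $\varphi$ (with a natural-number parameter, which we may absorb) that holds in $L$ but fails in $L_\lambda$; fix such a $\varphi$ witnessed in $L$ by some $L_\gamma\models\exists x\,\psi(x)$ with $\gamma<\sigma$ minimal, and note $\gamma>\lambda$. The idea is to run an OTM that searches through the $L$-hierarchy — OTMs can compute a code for $L_\beta$ uniformly in $\beta$, by Koepke's work — looking for the first stage $\beta$ at which $\varphi$ becomes true, and halts exactly then. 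Such a search halts at (essentially) stage $\gamma$, and with a little bookkeeping we can arrange that the halting time is an ordinal $\geq\lambda$ but this is only a contradiction if the halting time exceeds $\lambda$, which it does since the search must pass every $\beta\le\lambda$ without stopping ($\varphi$ fails in all $L_\beta$ for $\beta\le\lambda<\gamma$) and then stop at a stage related to $\gamma>\lambda$. This contradicts the definition of $\lambda$ as the supremum of clockable ordinals.

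The delicate point in the second direction is bookkeeping about \emph{when} the $L_\beta$-searching machine actually halts: computing a code for $L_\beta$ and checking a $\Sigma_1$ sentence in it takes time that is some ordinal function of $\beta$, not $\beta$ itself, so the naive halting time is not $\gamma$ but some larger ordinal $\gamma'$; one must argue $\gamma'$ is still $<\sigma$ (it is, since the whole search is definable over $L_\sigma$ and $L_\sigma$ is closed under the relevant operations, so $\gamma'\in L_\sigma$) and that $\gamma'\ge\lambda$, or alternatively invoke the speedup Lemma \ref{OTM speedup} and closure of clockables under ordinal arithmetic to clean up the exact halting time. I expect this calibration — making the clocked ordinal land provably above $\lambda$ while staying below $\sigma$ — to be the main obstacle; everything else is a routine application of the $\Sigma_1$-reflection property of $\sigma$ together with Koepke's theorem that OTMs compute the constructible hierarchy.
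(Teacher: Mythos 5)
Your first direction matches the paper's: ``$P$ halts'' is $\Sigma_{1}$, so $\Sigma_{1}$-elementarity of $L_{\sigma}$ forces the halting time below $\sigma$; your added detail about why the assertion is $\Sigma_{1}$ is fine but not in the paper. Your second direction, however, takes a genuinely different route. The paper's argument is a \emph{direct} construction: for any $\alpha<\sigma$, an ordinal code for $\alpha$ lies in $L_{\sigma}$ and is therefore OTM-writable (Koepke), so an OTM can write such a code and then run through it, taking $\geq\alpha$ steps; this immediately produces a clockable ordinal $>\alpha$. You instead argue by contradiction from $\lambda<\sigma$: extract a $\Sigma_{1}$ sentence $\varphi$ true in $L$ but false in $L_{\lambda}$ and run a search through the $L$-hierarchy until $\varphi$ first holds. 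That does work in spirit, but it carries two pieces of baggage the paper's version does not. First, your remark that the parameter of $\varphi$ is ``a natural-number parameter, which we may absorb'' is not automatic: $\Sigma_{1}$-elementarity is with parameters from $L_{\lambda}$, and reducing to a parameter-free (or numeral-parameter) witness requires either the pointwise $\Sigma_{1}$-definability of $L_{\sigma}$, or writing the parameter by a separate OTM computation before starting the search; either fix works but must be stated. Second, the calibration you flag yourself -- that the $L$-enumerating search passes time $\lambda$ before halting and stays below $\sigma$ -- is precisely what the paper's direct construction sidesteps: there one only needs the crude lower bound ``running through a code of $\alpha$ takes $\geq\alpha$ steps'', which is elementary. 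In short, your proof is recoverable but substantially heavier than necessary; the paper's argument for the second half is a two-line application of ``writable reals $=$ reals in $L_{\sigma}$'' plus a trivial timing bound.
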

\begin{proof}
	`The program $P$ halts' is $\Sigma_{1}$, thus, if it halts in $L$, then it halts in $L_{\sigma}$, and thus, the halting time of $P$, if it exists, is $<\sigma$.
	
	On the other hand, every real number in $L_{\sigma}$ is OTM-computable, including codes for all ordinals $<\sigma$, and thus we can write such a code for any ordinal $\alpha<\sigma$ and then run through this code, which takes at least $\alpha$ many steps. Thus, there is an OTM-clockable ordinal above $\alpha$ for every $\alpha<\sigma$.
\end{proof}

\begin{prop}{\label{OTM gap existence}}
	There are gaps in the OTM-clockable ordinals. That is, there are ordinals $\alpha<\beta<\gamma$ such that $\alpha$ and $\gamma$ are OTM-clockable, but $\beta$ is not.
\end{prop}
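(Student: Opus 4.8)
The plan is to argue by contradiction, using only Proposition~\ref{below sigma} together with the (standard) fact that $\sigma$ is admissible — indeed, every $\Sigma_1$-stable ordinal is admissible, since a $\Sigma_1$-statement true in $L$ of parameters from $L_\sigma$ reflects down, which is exactly what is needed to verify $\Sigma_1$-Collection in $L_\sigma$. So suppose there were no gaps. Since all finite ordinals are OTM-clockable and, by Proposition~\ref{below sigma}, the OTM-clockable ordinals are cofinal in $\sigma$, the no-gap hypothesis forces \emph{every} ordinal below $\sigma$ to be OTM-clockable; in particular the set of halting times of OTM-programs is cofinal in $\sigma$, and there are infinitely many halting programs (distinct clockable ordinals being clocked by distinct programs).

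First I would fix a fair dovetailing computation $\mathcal{D}$ simulating all OTM-programs $p_0,p_1,p_2,\ldots$ simultaneously; this is routine on a multitape OTM, at the cost of a harmless slowdown, and since each step of $\mathcal{D}$ advances at most one simulated program, the halt events occurring in $\mathcal{D}$ are linearly ordered. Define $F\colon\omega\to\sigma$ by letting $F(n)$ be the (original) halting time of the $(n{+}1)$-st program to halt during $\mathcal{D}$. Under the assumption $F$ is total, and its range is exactly the set of halting times of OTM-programs, hence cofinal in $\sigma$; note also that, for each clockable $\alpha$, the relevant initial segment of $\mathcal{D}$ (up to the dovetailed stage just after the corresponding program halts) has length $<\sigma$, since $\sigma$ is closed under ordinal arithmetic.

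Next I would verify that $F$ is $\Sigma_1$-definable over $L_\sigma$: the assertion ``$F(n)=\xi$'' is witnessed by the existence of an initial segment $\rho$ of $\mathcal{D}$ in which at least $n{+}1$ halts have occurred and the $(n{+}1)$-st of them is a program halting at its $\xi$-th step — and ``$\rho$ is a valid partial run of $\mathcal{D}$'' is a bounded condition on $\rho$. Because every halting OTM-computation halts before $\sigma$ (Proposition~\ref{below sigma}, via $L_\sigma\prec_{\Sigma_1}L$) and $L_\sigma$ is closed under recursions of length $<\sigma$, such a $\rho$ genuinely exists as an element of $L_\sigma$. Thus $F$ is a total $\Sigma_1$-definable function from $\omega$ into $\sigma$, so $\Sigma_1$-Replacement in the admissible set $L_\sigma$ puts $F[\omega]$ into $L_\sigma$, whence $F[\omega]$ is bounded below $\sigma$ — contradicting that $F[\omega]$ is cofinal in $\sigma$.

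The step I expect to be the main obstacle is the third one: one must formulate ``the halting time of the $(n{+}1)$-st program to halt'' so that it is a genuinely $\Sigma_1$ condition over $L_\sigma$, the temptation being to assert that certain programs \emph{never} halt (which is only $\Pi_1$), and one must check that the bookkeeping of the dovetailing neither spoils this $\Sigma_1$-ness nor disturbs the relevant suprema. Everything else — that no gaps collapses the clockable ordinals onto all of $[0,\sigma)$, that $\sigma$ is admissible, and that $\Sigma_1$-Replacement then bounds $F[\omega]$ — is standard. (Alternatively, once the later Theorem~\ref{sigma2 not otm clockable} is available, the proposition is immediate: any $\Sigma_2$-admissible ordinal $\tau$ with $\omega<\tau<\sigma$ fails to be OTM-clockable, while by Proposition~\ref{below sigma} there are OTM-clockable ordinals both below $\tau$ and above it; this is the direct analogue of the ITTM argument, with ``$\Sigma_2$-admissible'' replacing ``admissible''.)
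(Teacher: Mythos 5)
Your main argument contains a fatal gap. The decisive step is the claim that the range of $F$ is ``exactly the set of halting times of OTM-programs, hence cofinal in $\sigma$.'' This is false. The halt events occurring during the dovetailed run $\mathcal{D}$ are indeed linearly ordered (at most one per $\mathcal{D}$-step), but their order type need not be $\omega$, and in fact it \emph{cannot} be $\omega$: if it were, your $F$ would be a total $\Sigma_1(L_\sigma)$-definable map from $\omega$ cofinally into $\sigma$, contradicting the admissibility of $\sigma$. Note that your argument nowhere uses the no-gap hypothesis in an essential way --- $F$ is total and $\Sigma_1$ over $L_\sigma$ regardless, and the halting times are cofinal in $\sigma$ by Proposition~\ref{below sigma} regardless --- so if it were sound it would yield a contradiction outright, which is a clear warning sign. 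What actually happens is that the detection times $\tau_0<\tau_1<\cdots$ of the first $\omega$-many halts sup up to some $\eta\le\sigma$, and when $\eta<\sigma$ there remain infinitely many halting programs whose halts are detected only at $\mathcal{D}$-times $\ge\eta$ and are therefore never enumerated by $F\restriction\omega$. So $F[\omega]$ is (provably) bounded below $\sigma$, and no contradiction with $\Sigma_1$-Replacement arises.

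The paper's argument is of a genuinely different shape and avoids this trap. Instead of trying to build a $\Sigma_1$ $\omega$-enumeration of the halting times (which is exactly the kind of object admissibility forbids), one programs the dovetailer $\mathcal{D}$ itself to \emph{halt} as soon as it reaches a level at which no simulated program halts. If there were no gap, every level below $\sigma$ would witness a halt, so $\mathcal{D}$ could not halt before $\sigma$; but $\mathcal{D}$ is itself an OTM-program, so by Proposition~\ref{below sigma} its halting time, if any, must be $<\sigma$. This forces the existence of a gap. The contradiction is obtained from a single machine's halting behaviour, not from a replacement argument inside $L_\sigma$.

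Your parenthetical alternative --- pick a $\Sigma_2$-admissible $\tau$ with $\omega<\tau<\sigma$, note by Theorem~\ref{sigma2 not otm clockable} that $\tau$ is not clockable while Proposition~\ref{below sigma} gives clockables above and below $\tau$ --- is correct and would work, but it relies on Theorem~\ref{sigma2 not otm clockable}, which appears later in the paper; the paper's Proposition~\ref{OTM gap existence} is deliberately self-contained and proved by the Hamkins--Lewis dovetailing method instead.
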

\begin{proof}
	This works like the argument in Hamkins and Lewis \cite{HL} for the existence of gaps in the ITTM-clockable ordinals: Take the OTM-program that simultaneously simulates all OTM-programs and halts as soon as it arrives at a level at which no OTM-program halts. If there were no gap, then this program would halt after all OTM-halting times, which is a contradiction.
\end{proof}

\begin{lemma}{\label{very quick writing}}
	If an ordinal $\alpha$ is OTM-clockable, then a real number coding $\alpha$ is OTM-writable in $<\alpha^{\prime}$ many steps, where $\alpha^{\prime}$ denotes the next exponentially closed ordinal after $\alpha$.
\end{lemma}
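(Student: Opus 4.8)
The plan is to run the given clock alongside a routine that writes $\alpha$ down directly, and then to dig a real code for $\alpha$ out of the constructible hierarchy. Let $P$ halt at time $\alpha+1$, and let $Q$ work as follows. On a fresh tape $Q$ keeps an auxiliary head, starting on cell $0$, which each time it is activated writes a $1$ on its current cell and moves one cell right; $Q$ interleaves activations of this head with a step-by-step simulation of $P$. As interleaving finitely many subroutines costs nothing at limit stages, $P$ halts at some stage $\alpha+k$ (with $k<\omega$) of $Q$, at which point the fresh tape carries a $1$ on exactly the cells $<\alpha$ (plus finitely many more); so $Q$ now has $\alpha$ explicitly available as an initial segment of a tape. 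From there $Q$ computes the levels $L_\gamma$ of $L$ for increasing $\gamma$ via the usual OTM implementation of the $\mathrm{Def}$-recursion, and at each $\gamma$ searches the computed code of $L_\gamma$ for a bijection $b\colon\omega\to\delta$ onto an ordinal $\delta$ whose order type --- checked against the stored copy of $\alpha$ --- equals $\alpha$. The first time such a $b$ appears, $Q$ outputs the well-order $\{\langle n,m\rangle : b(n)<b(m)\}$ and halts.

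The output codes $\alpha$, so only the running time needs attention. Let $\gamma_\alpha$ be the least ordinal such that $L_{\gamma_\alpha}$ contains a real coding $\alpha$; then $Q$'s second phase halts while computing $L_{\gamma_\alpha}$. The first phase takes $\alpha+k<\alpha'$ steps. In the second phase, every task --- computing $L_\gamma$ for $\gamma<\gamma_\alpha$ and performing the bounded order-type checks --- is carried out by a standard OTM routine whose running time on data below $\alpha'$ stays below $\alpha'$, because $\alpha'$ is closed under ordinal addition, multiplication and exponentiation; and the second phase processes only levels $\gamma\le\gamma_\alpha$. Hence, provided $\gamma_\alpha<\alpha'$, the second phase halts in $<\alpha'$ steps, and so does $Q$ altogether, being a sum of two ordinals $<\alpha'$.

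It therefore remains to see that $\gamma_\alpha<\alpha'$, i.e.\ that a real coding $\alpha$ already occurs in $L_{\alpha'}$. By Proposition~\ref{below sigma}, $\alpha<\sigma$, and since $\sigma<\omega_1^{L}$ the ordinal $\alpha$ is countable in $L$, so $|\alpha|^{L}=\omega$. The point is then the standard fine-structural fact that the least $L$-level carrying a real code for a countable-in-$L$ ordinal $\alpha$ lies only moderately above $\alpha$ --- well below the next exponentially closed ordinal $\alpha'$, and in particular much below the next admissible ordinal above $\alpha$ (which may be far larger than $\alpha'$). Concretely, one exhibits a level $L_\gamma$ with $\alpha\le\gamma<\alpha'$ and $\Sigma_1$-projectum $\omega$; a $\Sigma_1(L_\gamma)$-definable surjection $\omega\to L_\gamma$ restricts to a surjection $\omega\to\alpha$ that lies in $L_{\gamma+1}$, whence $\gamma_\alpha\le\gamma+1<\alpha'$.

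I expect this last step --- pinning down $\gamma_\alpha$, i.e.\ arguing that for a clockable (hence countable-in-$L$) $\alpha$ the least $L$-level with a real code for $\alpha$ stays below $\alpha'$ rather than only below the next admissible --- to be the main obstacle; the remainder (arranging the simulation and the marking head so that limits are respected, and checking that the OTM computation of the $L$-hierarchy stays within the budget guaranteed by exponential closure) is routine.
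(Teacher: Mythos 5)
Your overall plan --- clock along $P$ to obtain $\alpha$, enumerate $L$ until a real code for $\alpha$ appears, extract it --- is sound, and the running-time bookkeeping you do (exponential closure of $\alpha'$ absorbs the cost of building $L$-levels below $\alpha'$) is the right kind of argument. The structural difference from the paper is that you materialize $\alpha$ on a tape and then \emph{search} for a code, whereas the paper knows in advance exactly where the code appears and just runs the $L$-enumerator to that level; but this is cosmetic.

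The genuine gap is in the step you yourself flag as the main obstacle, and your proposed fill-in for it is incorrect. You claim that for any $\alpha$ countable in $L$, the least $L$-level carrying a real code for $\alpha$ lies below the next exponentially closed ordinal $\alpha'$, and ``concretely'' that some $\gamma\in[\alpha,\alpha')$ has $\Sigma_1$-projectum $\omega$. Countability in $L$ does not give this. By Putnam-style gap results, there are countable-in-$L$ ordinals $\alpha$ such that $L_\alpha$ and $L_{\alpha+\delta}$ have the \emph{same} reals for $\delta$ far larger than the distance from $\alpha$ to $\alpha'$; along such a stretch no level is projectible to $\omega$ and no new real code for $\alpha$ appears, so $\gamma_\alpha\geq\alpha'$. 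Thus the ``standard fine-structural fact'' you appeal to is simply false for arbitrary countable-in-$L$ $\alpha$.

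What makes the conclusion true is the hypothesis you did not use: $\alpha$ is OTM-\emph{clockable}. If $P$ halts at time $\alpha+1$, then the $\Sigma_1$ sentence ``$P$ halts'' becomes true in $L_{\alpha+1}$ for the first time, so $\alpha+1$ is a $\Sigma_1$-index. By condensation applied to the $\Sigma_1$-Skolem hull of $\emptyset$ in $L_{\alpha+1}$, this forces a $\Sigma_1(L_{\alpha+1})$-definable surjection $\omega\to L_{\alpha+1}$, whence $L_{\alpha+2}$ already contains a real coding $\alpha+1$ (and hence $\alpha$). This pins $\gamma_\alpha$ down to $\alpha+2$ --- not merely ``somewhere below $\alpha'$'' --- and it is the place in the proof where clockability actually enters. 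With that substitution your argument goes through; the rest (the marking head, the interleaving, the $L$-enumeration budget) is fine.
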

\begin{proof}
If $\alpha$ is clocked by some OTM-program $P$, then $L_{\alpha+1}$ is minimal with the property that it believes that $P$ halts. Thus, there is a $\Sigma_{1}$-statement that becomes true in $L_{\alpha+1}$ for the first time. Hence $\alpha+1$ is an index. Thus, a real number coding $\alpha+1$ is contained in $L_{\alpha+2}$. But the OTM-program that enumerates $L$ will write $L_{\alpha+2}$ in $<\alpha^{+}$ many steps. So just run this program and "clock along", then it will halt when $L_{\alpha+2}$ has been written; then, we can easily find out the desired real code (a real number coding the ordinal height of the predecessor) in the code for $L_{\alpha+2}$.
\end{proof}

\begin{prop}{\label{sigma1 singularization clockable}}
	If $\beta<\alpha$ is exponentially closed and OTM-clockable and there is a total $\Sigma_{1}(L_{\alpha})$-function $f:\beta\rightarrow\alpha$ such that $f$ is cofinal in $\alpha$, then $\alpha$ is OTM-clockable.
\end{prop}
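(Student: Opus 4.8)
The plan is to build an OTM that enumerates the constructible hierarchy, detects the first stage at which $f\restriction\beta$ is total, and halts there; I will argue that this stage is exactly $\alpha$ and that, with some bookkeeping, it is reached after precisely $\alpha$ steps. I treat the main case in which $\alpha$ is a limit ordinal that is exponentially closed and indicate at the end how the remaining (easier, less interesting) cases are reduced to it.

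The mathematical core is the following recognition fact. Fix a $\Sigma_1$-formula $\psi(\xi,\gamma)\equiv\exists w\,\varphi(\xi,\gamma,w)$, with $\varphi$ a $\Delta_0$-formula, which defines $f$ over $L_\alpha$, so $f(\xi)=\gamma\iff L_\alpha\models\psi(\xi,\gamma)$ for $\xi<\beta$; and for $\xi<\beta$ let $\tau_\xi$ be least with $L_{\tau_\xi}\models\exists\gamma\,\psi(\xi,\gamma)$. Then $\alpha$ is the least $\theta$ with $L_\theta\models\forall\xi<\beta\,\exists\gamma\,\psi(\xi,\gamma)$; equivalently $\alpha=\sup_{\xi<\beta}\tau_\xi$. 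Indeed, the witness to $\psi(\xi,f(\xi))$ lies in $L_\alpha$, so $\tau_\xi\le\alpha$, while the value testified to at stage $\tau_\xi$ is an element of $L_{\tau_\xi}$ which, by functionality of $f$ and upward $\Delta_0$-absoluteness, equals $f(\xi)$, so $\tau_\xi>f(\xi)$; since $f$ is cofinal in $\alpha$, for every $\eta<\alpha$ there is $\xi$ with $f(\xi)\ge\eta$, hence $\tau_\xi>\eta$, so $\sup_\xi\tau_\xi=\alpha$. Conversely, if some $\theta<\alpha$ satisfied $L_\theta\models\forall\xi<\beta\exists\gamma\,\psi$, then for each $\xi<\beta$ some $\gamma_\xi\in L_\theta$ would have $L_\theta\models\psi(\xi,\gamma_\xi)$; upward absoluteness gives $L_\alpha\models\psi(\xi,\gamma_\xi)$, so $\gamma_\xi=f(\xi)<\theta$, whence the range of $f$ would be contained in $\theta<\alpha$, contradicting cofinality.

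For the machine I would, using the clockability of $\beta$ and Lemma \ref{very quick writing}, first write down a code for $\beta$, and then run the real-time enumeration of $L$ from the proof of Lemma \ref{very quick writing} (so that $L_\theta$ is available at step $\theta$ for exponentially closed $\theta$), clocking along. Dovetailed with the enumeration, the machine maintains the set $D$ of those $\xi<\beta$ whose witness has already surfaced in a produced level, and — using flag tapes and the $\liminf$-rules, in the manner of the construction preceding Lemma \ref{OTM speedup} — arranges to recognise, at a limit time, the first moment at which $D$ has grown to all of $\beta$. By the recognition fact this occurs at step $\alpha$; Lemma \ref{OTM speedup} then lets us correct the halting time to $\alpha+1$.

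The main obstacle is the timing: one must verify that this really halts at $\alpha+1$ and not merely somewhere near $\alpha$. Updating $D$ when a level $L_\theta$ is produced costs on the order of $\theta$ further steps (searching $L_\theta$ for witnesses, plus head bookkeeping), so the total update cost over $\theta<\alpha$ is $\sum_{\theta<\alpha}\theta$, while the enumeration of $L$ up to $L_\alpha$ also has length about $\alpha$; dovetailing the two step by step keeps the combined length equal to $\alpha$ precisely because $\sum_{\theta<\alpha}\theta$ collapses to $\alpha$ when $\alpha$ is additively indecomposable — and here exponential closure of $\beta$ is what guarantees that the update at a level $L_\theta$ with $\beta<\theta$ costs at most $\beta\cdot\theta=\theta<\theta^{+}$, so it does not outrun the enumeration. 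When $\alpha$ is not exponentially closed (and in particular when it is a successor) these sums overshoot, so such cases need a separate, more generous argument: one splits off the additively indecomposable tail of $\alpha$, to which $f$ still restricts to a cofinal $\Sigma_1$-definable map, and reassembles $\alpha$ from smaller clockable ordinals using closure of the clockable ordinals under ordinal arithmetic (cf. Lemma \ref{OTM multitape simulation} and the remarks preceding it).
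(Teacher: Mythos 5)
Your recognition fact is exactly the right mathematical core, and it matches what the paper's proof leaves implicit: $\alpha$ is the least $\theta$ with $L_\theta\models\forall\xi<\beta\,\exists\gamma\,\psi(\xi,\gamma)$, which you derive cleanly from $\Sigma_1$-upward absoluteness, functionality, and cofinality. But the implementational heart of the proposition --- \emph{how the machine halts with only a finite delay after this $L$-level is reached} --- is exactly where your sketch has a genuine gap, and it is exactly where the paper uses the clockability of $\beta$ in an essential way that you do not.

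You use clockability of $\beta$ only to write a code for $\beta$ and then say that the machine "maintains a set $D$ of $\xi<\beta$ whose witnesses have surfaced" and "using flag tapes and $\liminf$-rules \dots arranges to recognise, at a limit time, the first moment at which $D$ has grown to all of $\beta$." This is not a construction, and it is not clear it can be turned into one: $D$ becomes all of $\beta$ only \emph{at} time $\alpha$ (never strictly before), so the halting state cannot be a $\liminf$ of earlier halting states, and any post-hoc scan of the bit-vector $D$ to confirm that it is now full costs at least $\omega$ extra steps, putting the halting time at $\alpha+\omega$ or worse --- beyond what Lemma~\ref{OTM speedup}, which absorbs only a \emph{finite} delay, can repair. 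The paper's trick is precisely to replace this unspecifiable scan with the $\beta$-clock $B$ itself: simulate $B$ on a side tape and advance it by $\delta$ steps each time the initial segment of $\beta$ on which $f$ is seen to be defined grows by $\delta$; since $B$ is a genuine OTM that halts at time $\beta+1$, the simulated $B$ enters its halt-triggering configuration exactly when the defined segment reaches $\beta$, i.e.\ at time $\alpha$, and the outer machine notices this one step later. That is the detection mechanism you are missing, and it is the reason clockability of $\beta$ (not just writability) is a hypothesis.

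A secondary issue: your timing paragraph asserts that $\sum_{\theta<\alpha}\theta$ "collapses to $\alpha$ when $\alpha$ is additively indecomposable." It does not --- e.g.\ $\sum_{\theta<\omega^2}\theta=\omega^3$; multiplicative closure at least is needed. This is not fatal because, in the application, $\alpha$ is a gap-starter and hence exponentially closed, but the hypothesis you cite (exponential closure of $\beta$, not $\alpha$) does not do the work you attribute to it. Likewise, the final sentence about splitting off the additively indecomposable tail of $\alpha$ and restricting $f$ to it does not obviously produce a $\Sigma_1(L_{\text{tail}})$-function from a $\Sigma_1(L_\alpha)$-function, so the promised reduction of the non-closed case is not established.
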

\begin{proof}
	This works by the same argument as the "only admissibles start gaps"-theorem for ITTMs, see Welch \cite{W}: Suppose for a contradiction that $\alpha$ starts an OTM-gap, but is not admissible. 

	Pick $\beta<\alpha$ OTM-clockable and $f:\beta\rightarrow\alpha$ such that $f$ is $\Sigma_{1}(L_{\alpha})$ and cofinal in $\alpha$. Let $B$ be an OTM-program that clocks $\beta$.
	By the last lemma, we can compute a real code for $\beta$ in $<\beta^{\prime}\leq\alpha$ many steps.
	Run the OTM that enumerates $L$. If $\beta$ is exponentially closed, then we will have a code for $L_{\beta}$ on the tape at time $\beta$. In addition, for each new $L$-level, check which ordinals recieve $f$-images when evaluating the definition of $f$ in that level. Determine the largest ordinal $\gamma$ such that $f$ is defined on $\gamma$. Whenever $\gamma$ increases, say from $\gamma_{0}$ to $\gamma_{1}$, let $\delta$ be such that $\gamma_{0}+\delta=\gamma_{1}$ and run $B$ for $\delta$ many steps. When $B$ halts, all elements of $\beta$ have images, so we have arrived at time $\alpha$.
\end{proof}

This suffices to for an OTM-analogue of Welch's theorem \cite{W}:

\begin{corollary}{\label{OTM gap starters admissible}}
If $\alpha$ starts a gap in the OTM-clockable ordinals, then $\alpha$ is admissible.
\end{corollary}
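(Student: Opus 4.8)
The plan is to obtain the statement by contraposition from Proposition~\ref{sigma1 singularization clockable}. Suppose $\alpha$ starts a gap $[\alpha,\beta)$ in the OTM-clockable ordinals but, for contradiction, that $\alpha$ is \emph{not} admissible. Since gap starters are limit ordinals $>\omega$, the failure of admissibility of $\alpha$ yields, by the usual argument (failure of $\Sigma_1$-collection in $L_\alpha$), an ordinal $\gamma_0<\alpha$ together with a total $\Sigma_1(L_\alpha)$-function $g\colon\gamma_0\to\alpha$ whose range is cofinal in $\alpha$.

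The only thing preventing an immediate appeal to Proposition~\ref{sigma1 singularization clockable} is that $\gamma_0$ need not be exponentially closed or OTM-clockable, so the next step is to enlarge the domain. Because $[\alpha,\beta)$ is a gap, the OTM-clockable ordinals are cofinal in $\alpha$, and --- using that the clockable ordinals are closed under ordinal exponentiation (footnote to Lemma~\ref{OTM speedup}) together with the quick-writing bound of Lemma~\ref{very quick writing} --- one checks that the exponentially closed OTM-clockable ordinals are cofinal in $\alpha$ as well. Fix such a $\beta_0$ with $\gamma_0\le\beta_0<\alpha$ and set $f\colon\beta_0\to\alpha$, $f(\xi)=g(\xi)$ for $\xi<\gamma_0$ and $f(\xi)=0$ for $\gamma_0\le\xi<\beta_0$. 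Since $\gamma_0,\beta_0\in L_\alpha$, the function $f$ is again $\Sigma_1(L_\alpha)$, it is total on $\beta_0$, and it remains cofinal in $\alpha$ because $g$ was. Now $\beta_0<\alpha$ is exponentially closed and OTM-clockable and $f\colon\beta_0\to\alpha$ is a total $\Sigma_1(L_\alpha)$-function cofinal in $\alpha$, so Proposition~\ref{sigma1 singularization clockable} gives that $\alpha$ is OTM-clockable. This contradicts the fact that no element of the gap $[\alpha,\beta)$ is OTM-clockable, so $\alpha$ must be admissible.

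I expect the main obstacle to be exactly the cofinality in $\alpha$ of the exponentially closed OTM-clockable ordinals: to push a given clockable $\gamma_1<\alpha$ up to an exponentially closed \emph{and still clockable} ordinal below $\alpha$ one must control the overhead incurred when writing (a code for) the least exponentially closed ordinal above $\gamma_1$ and then running through it, so that the computation halts exactly at an exponentially closed stage; an alternative is to argue directly that every gap starter is a limit of exponentially closed clockable ordinals, using the closure of the clockable ordinals under ordinal arithmetic. Everything else in the argument --- extracting $g$ from non-admissibility and padding it out to $f$ --- is routine.
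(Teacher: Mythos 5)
Your argument follows the paper's own route almost exactly: assume non-admissibility, extract a total cofinal $\Sigma_1(L_\alpha)$-function from a small domain, pad/extend it to a larger clockable domain using Lemma~\ref{very quick writing}, and invoke Proposition~\ref{sigma1 singularization clockable} to clock $\alpha$, contradicting that $\alpha$ sits in a gap. The one place you diverge is in how you handle the exponential-closure hypothesis on the domain ordinal, and you rightly flag it as the weak point.

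The paper does not try to locate an exponentially closed \emph{and} OTM-clockable $\beta_0$ below $\alpha$. It first records that $\alpha$ itself is exponentially closed (every gap starter is), then simply picks $\gamma>\beta$ OTM-clockable with all parameters in $L_\gamma$, writes a code for $L_\gamma$ in $<\gamma'\le\alpha$ steps via Lemma~\ref{very quick writing}, and then runs the construction of Proposition~\ref{sigma1 singularization clockable}. This works because the exponential-closure assumption on $\beta$ in Proposition~\ref{sigma1 singularization clockable} is not what the argument actually needs: what it needs is that the relevant $L$-level can be produced in $<\alpha$ steps, which follows from $\alpha$ (not the domain) being exponentially closed, so that $\gamma'\le\alpha$. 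So the paper sidesteps your "main obstacle" entirely --- it never proves, and does not need, that exponentially closed clockable ordinals are cofinal in $\alpha$. Your padding of $g$ to $f$ is fine and mirrors what the paper implicitly does, but if you insist on matching the stated hypothesis of the Proposition literally, the cofinality claim you invoke via ``one checks'' is a genuine unproved step (iterating $\omega^{(\cdot)}$ keeps you clockable and below $\alpha$, but the resulting exponentially closed limit may fail to be clockable, or may equal $\alpha$ when $\operatorname{cf}(\alpha)=\omega$). The cleaner fix is the paper's: observe that exponential closure of $\alpha$ suffices and drop the requirement on the domain.
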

\begin{proof}
	As $\alpha$ starts an OTM-gap, it is exponentially closed.
	
	If $\alpha$ is not admissible, there is a total cofinal $\Sigma_{1}(L_{\alpha})$-function $f:\beta\rightarrow\alpha$ with $\beta<\alpha$. Pick $\gamma>\beta$ OTM-clockable and large enough so that all parameters are contained in $L_{\gamma}$. By Lemma \ref{very quick writing}, we can write a real code for $L_{\gamma}$, and thus for all of its elements 
	in time $<\gamma^{\prime}\leq\alpha$. We can now use Proposition \ref{sigma1 singularization clockable} to clock $\alpha$, a contradiction.
\end{proof}

\section{$\Sigma_{2}$-admissible ordinals are not OTM-clockable}

We now show that no $\Sigma_{2}$-admissible ordinal $\alpha$ can be the halting time of a parameter-free OTM-computation. The proof is mostly an adapatation of Welch's argument to the extra subtleties of OTMs.

\begin{thm}{\label{sigma2 not otm clockable}}
	No $\Sigma_{2}$-admissible ordinal is OTM-clockable.
\end{thm}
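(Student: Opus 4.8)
The plan is to adapt Welch's argument that no admissible ordinal is ITTM-clockable, isolating the two places where the OTM setting forces genuine changes. Assume for contradiction that a parameter-free OTM-program $P$ clocks the $\Sigma_2$-admissible ordinal $\alpha$, i.e.\ the computation of $P$ runs for exactly $\alpha$ steps and enters the halting state at time $\alpha+1$; write $C_\gamma=(T_\gamma,h_\gamma,q_\gamma)$ for the configuration (tape contents, head position, inner state) at time $\gamma\le\alpha$, so that $T_\alpha$, $h_\alpha$, $q_\alpha$ are the pointwise resp.\ ordinal $\liminf$s of the preceding values dictated by the OTM limit rules. Since $\alpha$ is $\Sigma_2$-admissible it is in particular admissible, hence a limit ordinal (additively and multiplicatively closed) and a limit of admissibles. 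The goal is to produce a limit ordinal $\beta<\alpha$ with $C_\beta=C_\alpha$; by determinism this gives $C_{\beta+1}=C_{\alpha+1}=\text{(halt)}$, so $P$ already halts at time $\beta+1<\alpha+1$, contradicting minimality of the halting time. (Note this is exactly where plain admissibility cannot suffice, since admissible ordinals such as $\omega_n^{\mathrm{CK}}$ \emph{are} OTM-clockable by the results of Section~3; the recurrence must genuinely use $\Sigma_2$-admissibility.)

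The first new ingredient handles the fact that, unlike an ITTM-tape, the OTM-tape has proper-class length, so ``$\liminf$-stabilization of all cells'' is no longer a statement about a set in $L_\alpha$. For a fixed cell index $\xi$, the computation of $P$ up to any stage $\delta<\alpha$ is $\Delta_1$ over $L_\alpha$, so the predicate ``cell $\xi$ has eventual value $v$ below $\alpha$'', i.e.\ $\exists\gamma<\alpha\,\forall\delta\in[\gamma,\alpha)\;T_\delta(\xi)=v$, is $\Sigma_2(L_\alpha)$. Using $\Sigma_2$-collection in $L_\alpha$ one shows that $D=\{\xi:T_\alpha(\xi)=1\}$ is bounded in $\alpha$: otherwise the $\Sigma_2$-relation $R(\eta,\gamma):\equiv\exists\xi\ge\eta\,\forall\delta\in[\gamma,\alpha)\;T_\delta(\xi)=1$ satisfies $\forall\eta<\alpha\,\exists\gamma\,R(\eta,\gamma)$, and collecting the witnessing $\gamma$'s bounds them all below some $\gamma^\ast<\alpha$, which is absurd because a cell of index $\ge\gamma^\ast$ cannot carry a $1$ before stage $\gamma^\ast$. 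Thus there is $\beta_0<\alpha$ with $T_\alpha\restriction[\beta_0,\alpha)\equiv 0$; a variant of the same argument bounds, below a single $\beta_1\in(\beta_0,\alpha)$, the stabilization times of all ``eventually $1$'' cells, so that from time $\beta_1$ on the tail $[\beta_1,\alpha)$ of the tape is never again permanently written on.

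The second new ingredient handles the OTM head-position rule, which does not reset the head to $0$ at limits, so the recurring configuration must also match on $h$. Here the $\omega$-block (``sweep'') picture of Section~2 is used, via a case split. \emph{Runaway case:} if $\limsup_{\gamma<\alpha}h_\gamma=\alpha$ (in particular if $h_\alpha=\alpha$), then combining the tail-control above with the finiteness of the state set one checks that from some stage on the head marches monotonically rightward over fresh (hence $0$-valued) cells with an eventually periodic pattern of inner states; consequently the transition applied at the limit stage $\alpha$ coincides with the ``move right, do not halt'' transition applied at cofinally many earlier limit stages, so $C_{\alpha+1}$ is not a halting configuration --- contradiction. \emph{Bounded case:} $h_\alpha=\mu<\alpha$ and the head is eventually bounded by some $\beta_2\in(\beta_1,\alpha)$, so after stage $\beta_2$ the computation is confined to cells below $\beta_2$ and is now essentially an ITTM-style computation on a set-length tape. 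Since the head moves by at most one cell per successor step it cannot jump over $\mu$, so $\mu$ is attained as a head position cofinally often; combining this with the finiteness of states and a final application of $\Sigma_2$-admissibility (to bound, for $\xi<\beta_2$, the stages by which $T_\gamma(\xi)$ has reached its $\liminf$ value) yields a cofinal set of limit stages $\gamma$ with $h_\gamma=\mu$, $q_\gamma=q_\alpha$, and $T_\gamma\restriction\beta_2=T_\alpha\restriction\beta_2$; together with $T_\alpha\restriction[\beta_0,\alpha)\equiv 0$ any such $\gamma$ satisfies $C_\gamma=C_\alpha$, as required.

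The step I expect to be the main obstacle is the head-position analysis of the last paragraph: cleanly separating the runaway behaviour from the eventually-bounded behaviour of the head, and --- inside the bounded case --- verifying that $h_\alpha$ is attained cofinally often \emph{with the correct accompanying inner state}, so that a genuine configuration recurrence and not merely a $\liminf$-agreement is obtained. This is precisely the point at which Welch's ITTM-argument, where the head sits at position $0$ at every limit, has no counterpart and must be reworked. By contrast, the use of $\Sigma_2$-admissibility to tame the proper-class-many cells is fairly routine once the right $\Sigma_2$ predicate is identified.
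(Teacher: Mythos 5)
The core goal you set yourself --- producing a limit $\beta<\alpha$ with the \emph{entire} configuration $C_\beta$ equal to $C_\alpha$, tape contents included --- is strictly stronger than what is needed and is the source of a real gap. Whether $P$ halts at a given moment depends only on the pair (inner state, symbol currently read); the rest of the tape is irrelevant. The paper exploits exactly this: it never attempts to control the tape away from the head, and the three cases are organized around the single quantity $\rho=h_\alpha$ (Case 1: $\rho<\alpha$ recurring cofinally; Case 2: $\rho<\alpha$ recurring boundedly often; Case 3: $\rho=\alpha$). In Case 2 a $\Pi_1$, hence $\Sigma_2$, function $\iota\mapsto\tau_\iota$ with domain $\rho<\alpha$ is bounded by $\Sigma_2$-admissibility; crucially the domain is a proper initial segment of $\alpha$, so this is a legitimate application of $\Sigma_2$-collection/regularity.

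Your ``first new ingredient'' is where the argument actually breaks. You claim $D=\{\xi:T_\alpha(\xi)=1\}$ is bounded in $\alpha$ via $\Sigma_2$-collection applied to $\forall\eta<\alpha\,\exists\gamma\,R(\eta,\gamma)$. But collection in $L_\alpha$ applies to $\forall x\in a$ with $a\in L_\alpha$, and $\alpha\notin L_\alpha$; equivalently, $\Sigma_2$-admissibility bounds $\Sigma_2$-definable total functions $f:\beta\to\alpha$ only for $\beta<\alpha$, and here the ``domain'' is $\alpha$ itself, so no bound is forthcoming (compare: the identity $\alpha\to\alpha$ is $\Delta_0$ and cofinal). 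Moreover the statement is not obviously true on its own merits --- an OTM can march a head rightward writing $1$s it never revisits, leaving $1$s cofinally in the final tape. Since you need $D$ bounded precisely so that a full configuration match with some $C_\beta$ ($\beta<\alpha$) is even possible, this is not a local fix but a sign that the decomposition is wrong. Your ``runaway case'' also asserts an eventually periodic state pattern over fresh cells, which is not justified (nothing prevents long excursions inside $\omega$-blocks between rightward steps). The right move, taken by the paper, is to drop the demand $T_\beta=T_\alpha$ entirely: in Case 3 the head at time $\alpha$ is on a fresh cell and thus reads $0$, and one produces a club of earlier \emph{fresh-cell} limit times at which the state is also $s$, so only (state, symbol-read) needs to recur. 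I'd suggest reworking your argument along those lines; your discussion of where Welch's proof needs reworking (the head not being reset to $0$ at limits) correctly identifies the obstacle, but the full-configuration strategy does not overcome it.
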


\begin{proof}

Let $\alpha$ be $\Sigma_{2}$-admissible and assume for a contradiction that $\alpha$ is the halting time of the parameter-free OTM-program $P$. At time $\alpha$, suppose that the read-write-head is at position $\rho$, the program is in state $s\in\omega$ and the head reads the symbol $z\in\{0,1\}$. As one cannot move the head more than $\alpha$ many places to the right in $\alpha$ many steps, we have $\rho\leq\alpha$.

By the limit rules, $z$ must have been the symbol on cell $\rho$ cofinally often before time $\alpha$ and similarly, $s$ must have been the program state cofinally often before time $\alpha$. By recursively building an increasing `interleaving' sequence of ordinals of both kinds, we see that the set $R$ of times at which the program state was $s$ and the symbol on $\rho$ was $z$, we see that $R$ is closed and unbounded in $\alpha$.

We now distinguish three cases. 

\bigskip

\textbf{Case 1}: $\rho<\alpha$ and the head position $\rho$ was assumed cofinally often before time $\alpha$.

Let $\beta$ be the order type of the set of times at which $\rho$ was the head position in the computation of $P$. We show that $\beta=\alpha$. If not, then $\beta<\alpha$; let $f:\beta\rightarrow\alpha$ be the function sending each $\iota<\beta$ to the $\iota$th time at which $\rho$ was the head position. Then $f$ is $\Sigma_{1}$ over $L_{\alpha}$ and thus, by admissibility of $\alpha$, $f[\beta]$ is bounded in $\alpha$, contradicting the case assumption.

Let $T$ be the set of times at which $\rho$ was the head position. Then, by the limit rules and the case assumption, $T$ is closed and unbounded in $\alpha$. 

As $S$ and $T$ are both $\Sigma_{1}$ over over $L_{\alpha}$ and $\alpha$ is admissible, it follows that $S\cap T$ is also closed and unbounded in $\alpha$. In particular, there is an element $\gamma<\alpha$ in $S\cap T$, i.e. there is a time $<\alpha$ at which the head was on position $\rho$, the cell $\rho$ contained the symbol $z$ and the inner state was $s$. But then, the situation that prompted $P$ to halt at time $\alpha$ was already given at time $\gamma<\alpha$, so $P$ cannot have run up to time $\alpha$, a contradiction.

\bigskip

\textbf{Case 2}: $\rho<\alpha$ and the head position $\rho$ was assumed boundedly often before time $\alpha$.

By the liminf rule for the determination of the head position at time $\alpha$, this implies that, for every $\iota<\rho$, there is a time $\tau_{\iota}<\alpha$ such that, from time $\tau_{\iota}$ on, the head never occupied a position $<\iota$. The function $f:\iota\mapsto\tau_{\iota}$ is $\Pi_{1}$ over $L_{\alpha}$ (we have $f(\iota)=\tau$ if and only if, for all $\beta>\tau$ and all partial $P$-computations of length $\beta$, the head position in the final state of the partial computation was $\geq\iota$) and thus in particular $\Sigma_{2}$ over $L_{\alpha}$. By $\Sigma_{2}$-admissibility of $\alpha$ and the case assumption $\rho<\alpha$, the set $f[\rho]$ must be bounded in $\alpha$, say by $\gamma<\alpha$. But this implies that, after time $\gamma$, all head positions were $\geq\rho$. As $\rho$ was assumed only boundedly often as the head position, this means that, from some time $<\alpha$ on, all head positions were actually $>\rho$. But then, $\rho$ cannot be the inferior limit of the sequence of earlier head positions at time $\alpha$, contradicting the case assumption that the head is on position $\rho$ at time $\alpha$.

\bigskip

\textbf{Case 3}: $\rho=\alpha$.

This implies that the head is on position $\rho$ for the first time at time $\alpha$, so that we must have $z=0$, as there was no chance to write on the $\rho$th cell before time $\alpha$. 

Let $S$ be the set of times $<\alpha$ at which some head position was assumed for the first time during the computation of $P$. By the same reason as above, this newly reached cell will contain $0$ at that time. If we can show that there is such a time $<\alpha$ at which the inner state is also $s$, we are done, because that would mean that the halting situation at time $\alpha$ was already given at an earlier time, contradicting the assumption that $P$ halts at time $\alpha$. 

As $\rho>0$, there must be an ordinal $\tau<\alpha$ such that the head was never on position $0$ after time $\tau$ (otherwise, the liminf rule would force the head to be on position $0$ at time $\alpha$). This means that the head was never moved to the left from a limit position after time $\tau$. This further implies that, after time $\tau$, for any position $\beta$ that the head occupied, all later positions were at most finitely many positions to the left of $\beta$ and hence that, if $\beta$ is a limit ordinal, then it never occupied a position $<\beta$ afterwards. In particular, the sequence of limit positions that the head occupied after time $\tau$ is increasing. Note that the set of head positions occupied before time $\tau$ is bounded in $\alpha$, say by $\xi$. Let $S^{\prime}$ be the set of elements $\iota>\tau$ of $S$ such that, at time $\iota$, the head occupied a limit position $>\xi$ for the first time. Then $S^{\prime}$ is a closed and unbounded subset of $S$. 

As $s$ is the program state at the limit time $\alpha$, there must be $\gamma<\alpha$ such that, after time $\gamma$, the program state was never $<s$ and moreover, the program state $s$ itself must have occured cofinally often in $\alpha$ after that time.

But now, building an increasing $\omega$-sequence of times starting with $\gamma$ that alternately belong to $S^{\prime}$ and have the program state $s$, we see that its limit $\delta$ is $<\alpha$ and is a time at which the head was reading $z$ and the state was $s$, we have the desired contradiction.

\bigskip

Since each case leads to a contradiction, our assumption on $P$ must be false; as $P$ was arbitrary, $\alpha$ is not a parameter-free OTM-halting time.

\end{proof}

\textbf{Remark}: In the second case, we must have that $\rho$ is a limit ordinal bigger than $0$ (successor ordinals and zero cannot come up as liminfs in any other way). If $\rho$ is not of the form $\beta+\omega$, then the argument for case $2$ applies as well: For as $\rho>0$, there must be some time $\tau<\alpha$ after which $0$ was not the head position any more. However, the only way to move the head to the left is to move it to the left from a limit position, which brings it to position $0$. Thus, the sequence of limit position after time $\tau$ was increasing and we can redefine $f$ to send $\omega\iota<\rho$ to the first time $>\tau$ when the head was on position $\omega\iota$; this will send (the set of limit ordinals below) $\rho$ cofinally into $\alpha$ in a $\Sigma_{1}(L_{\alpha})$-definable way, a contradiction. Thus, in this case, only admissibility is required!

In the case $\rho=\beta+\omega$, a final segment of the computation takes place only in this $\omega$-portion between $\beta$ and $\rho$, and only at the end is the head moved out of this portion to position $\rho$. What happens during this time is thus something like an ITTM-computation with the content $x$ of this $\omega$-portion of the tape at the time the head moves into this position for the first time after $\tau$ serving as an oracle.\footnote{"Something like", because of course the head position rule and the limit state rule are not adhered to; however, this can be simulated, but apparently only at the cost of a time delay.}

\section{Existence of admissible OTM-clockable ordinals}

We will now show that at least the first $\omega$ many admissible ordinals are OTM-clockable, thus answering the first question mentioned in the introduction positively.

We recall Theorem $6$ from \cite{CFKMNW}:

\begin{thm}{\label{ITRM no gaps}}
	There are no gaps in the ITRM-clockable ordinals. That is, if $\alpha<\beta$ and $\beta$ is ITRM-clockable, then $\alpha$ is ITRM-clockable.
\end{thm}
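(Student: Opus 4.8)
The cleanest route is to prove the sharper fact that the ITRM-clockable ordinals are precisely the ordinals below $\omega_{\omega}^{\mathrm{CK}}$; since that set is an initial segment of the ordinals, the absence of gaps follows at once. I would prove the two inclusions separately.

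For the upper bound I would argue that an ITRM-program using $n$ registers, if it ever halts, halts before some fixed admissible ordinal depending only on $n$ (morally $\omega_{n+1}^{\mathrm{CK}}$). This is the standard ``bounded loop'' argument: a configuration of an $n$-register ITRM is essentially a tuple in $\omega^{n}$ together with a program state, and analysing the liminf behaviour of the configuration sequence inside the relevant $L_{\omega_{n+1}^{\mathrm{CK}}}$ shows that a computation which has not halted by that stage has already entered a non-halting loop. Hence every clockable ordinal is below $\omega_{\omega}^{\mathrm{CK}}$.

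For the lower bound I would show by induction on $n$ that every $\alpha<\omega_{n}^{\mathrm{CK}}$ is ITRM-clockable. The base case records that every recursive ordinal is clockable, by running a fixed machine that counts through a recursive well-order, with the number of registers used bounded uniformly. For the inductive step one invokes the level-by-level analysis of ITRM-computability due to Koepke and Miller: with enough registers an ITRM can compute a master code for the $\Sigma_{1}$-theory of $L_{\omega_{n}^{\mathrm{CK}}}$, hence a real coding any prescribed $\alpha<\omega_{n+1}^{\mathrm{CK}}$; once such a code has been produced one counts through it to consume exactly $\alpha$ further steps, so the machine halts at time $\alpha+c$ for a fixed finite $c$, and $\alpha$ itself is then clockable since clockable ordinals are closed under finite addition. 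Combining the two inclusions, the ITRM-clockable ordinals form exactly $\{\alpha:\alpha<\omega_{\omega}^{\mathrm{CK}}\}$, a downward closed set, so there are no gaps.

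The main obstacle is the inductive step of the lower bound: it genuinely relies on the Koepke--Miller description of which sets an $n$-register ITRM computes, and one has to check that ``counting through the code'' really uses up $\alpha$ steps and that the register liminf rule does not spoil the count at limit stages. The upper-bound direction, by contrast, is comparatively routine. A more self-contained but messier alternative avoids the characterization entirely: from a clock for $\beta$ one extracts a writable code for $\beta$, hence, by passing to a subcode, a writable code for each $\alpha<\beta$, and then one clocks $\alpha$ by counting through that code; but turning a merely writable code into something one can count through ``in real time'' forces one to re-derive its bits on the fly, which is precisely where the bookkeeping becomes delicate.
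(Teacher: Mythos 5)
The paper does not prove this theorem; it is imported verbatim as Theorem~6 of \cite{CFKMNW}, so there is no in-paper argument to compare your attempt against. Note also that the paper's later Lemma~\ref{ITRM clockables} \emph{derives} the characterization $\{\alpha : \alpha < \omega_\omega^{\text{CK}}\}$ from the present no-gaps theorem, not the other way around, so your route reverses the logical order used here and must stand on its own.

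On its own merits, the sketch has a genuine gap at the crucial point. The upper bound (halting times of $n$-register programs lie below $\omega_{n+1}^{\text{CK}}$) is a correct appeal to Koepke's analysis. But your lower bound rests on the unargued claim that, once an ITRM has produced a real code for $\alpha$, it can ``count through it to consume exactly $\alpha$ further steps.'' This is not routine bookkeeping; it is essentially the content of the theorem. Having a writable code for $\alpha$ shows only that \emph{some} clockable ordinal lies at or beyond $\alpha$ -- the naive well-foundedness check runs for roughly $\omega\cdot\alpha$ steps, not $\alpha$ -- and the mere availability of codes cannot by itself yield exact clockability: for ITTMs one can write codes for ordinals lying inside gaps, yet those ordinals are not ITTM-clockable. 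To close the argument you would have to exhibit a register-level procedure that exploits the liminf rule so as to hit $\alpha$ on the nose (up to a fixed finite offset), and that is precisely where the real work of the CFKMNW proof lies. A smaller slip: to pass from a machine halting at time $\alpha + c$ to a clock for $\alpha$ itself you need closure under finite \emph{subtraction} (a speedup lemma, analogous to the paper's Lemma~\ref{OTM speedup} for OTMs), not ``closure under finite addition,'' and that speedup direction is itself a nontrivial fact requiring an argument.
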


Combining this result with the main result of \cite{Koe2} on the computational strength of ITRMs, we obtain:

\begin{lemma}{\label{ITRM clockables}}
	The ITRM-clockable ordinals are exactly those below $\omega_{\omega}^{\text{CK}}$. In particular, $\omega_{n}^{\text{CK}}$ is ITRM-clockable for all $n\in\omega$.
\end{lemma}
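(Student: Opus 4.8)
The plan is to combine Theorem~\ref{ITRM no gaps} with a characterization of the supremum of the ITRM-clockable ordinals. First I would recall from \cite{Koe2} that ITRMs decide exactly the $\Pi_1^1$-sets in the codes, or more precisely that the ITRM-computable reals are exactly those hyperarithmetic in finitely many Halting-problem-type oracles; the relevant consequence is that the ITRM-recognizable/computable ordinals are cofinal in $\omega_\omega^{\text{CK}}$ but that no real coding an ordinal $\geq\omega_\omega^{\text{CK}}$ is ITRM-computable. The key structural fact is that for any reasonable machine model, the supremum of the clockable ordinals and the supremum of the writable ordinals coincide (or at least that clockable ordinals are bounded by the writable ones): if $P$ clocks $\alpha$ then running $P$ while simultaneously writing a tally/ordinal code yields a real coding $\alpha$, so every clockable ordinal is writable. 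Hence the supremum of the ITRM-clockable ordinals is at most $\omega_\omega^{\text{CK}}$, the supremum of the ITRM-writable ordinals.

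Next I would argue the reverse inclusion up to cofinality: by the main result of \cite{Koe2}, for each $n$ a real coding some ordinal $\geq\omega_n^{\text{CK}}$ is ITRM-computable (indeed $\omega_n^{\text{CK}}$ itself is the $n$-th admissible and arises as a closure ordinal of an ITRM-computable process), so the ITRM-clockable ordinals are cofinal below $\omega_\omega^{\text{CK}}$. Combining this cofinality with Theorem~\ref{ITRM no gaps} — the \emph{no gaps} theorem — gives that \emph{every} ordinal below $\omega_\omega^{\text{CK}}$ is ITRM-clockable: any $\alpha<\omega_\omega^{\text{CK}}$ lies below some clockable ordinal, and since there are no gaps, $\alpha$ is itself clockable. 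Conversely the first paragraph shows nothing $\geq\omega_\omega^{\text{CK}}$ is clockable, so the ITRM-clockable ordinals are exactly those below $\omega_\omega^{\text{CK}}$. The ``in particular'' is then immediate since $\omega_n^{\text{CK}}<\omega_\omega^{\text{CK}}$ for every $n\in\omega$.

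The main obstacle is pinning down precisely which statement from \cite{Koe2} is being invoked and extracting from it the two facts used above: (i) that no ITRM can compute a code for an ordinal $\geq\omega_\omega^{\text{CK}}$, and (ii) that ITRM-writable ordinals are cofinal in $\omega_\omega^{\text{CK}}$. Fact (i) follows because an ITRM-computation halting in a code for $\alpha$ witnesses, via Koepke's analysis, that $\alpha$ is ``ITRM-recursive'' and hence below the relevant ordinal; fact (ii) is the content of the computational-strength theorem that ITRMs can iterate the hyperjump finitely often. One should also double-check the folklore lemma that clockable $\leq$ writable applies to ITRMs without the exponential-closure caveats that appeared for OTMs — for ITRMs this is unproblematic since one simply writes a tally mark per step on a register-simulated scratch space — but this is routine and I would not belabor it.
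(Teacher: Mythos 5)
Your overall architecture matches the paper's: establish that the ITRM-clockable ordinals are bounded above by $\omega_\omega^{\text{CK}}$, that they are cofinal in $\omega_\omega^{\text{CK}}$, and then invoke the no-gaps theorem to fill in the rest. But there are two genuine issues, one a missing bridge and one a misapplied folklore argument.

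The missing bridge is in the cofinality step. You pass from ``a real coding $\omega_n^{\text{CK}}$ is ITRM-computable'' directly to ``the ITRM-\emph{clockable} ordinals are cofinal in $\omega_\omega^{\text{CK}}$,'' but writability of a code for $\alpha$ does not by itself yield a program halting in $\geq\alpha$ steps --- a machine can write a code very quickly, and the vague parenthetical about $\omega_n^{\text{CK}}$ being ``a closure ordinal of an ITRM-computable process'' does not cash out the claim either, since a process with closure ordinal $\alpha$ need not halt there or ever. The paper supplies the bridge explicitly: given an ITRM-computable code $c$ for $\alpha$, it runs the code-producing program followed by the ITRM well-foundedness check on $c$, and observes that the well-foundedness check on a code for a well-order of type $\alpha$ necessarily takes at least $\alpha$ steps. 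That concrete program gives the clockable ordinal $>\alpha$.

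The second, smaller issue is your justification of the folklore lemma ``clockable $\leq$ writable'' for ITRMs by ``writing a tally mark per step on a register-simulated scratch space.'' ITRMs have no tape and only finitely many natural-number registers, so there is no scratch space on which to accumulate a tally of transfinite length; the lemma is true for ITRMs (it appears in \cite{CFKMNW}), but not by that argument. The paper sidesteps this entirely for the upper bound, citing Theorem 6 of \cite{Koe2} directly for the fact that every ITRM-halting time is $<\omega_\omega^{\text{CK}}$, which is the cleaner route.
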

\begin{proof}
	By Theorem $6$ of \cite{Koe2}, every ITRM-halting time is $<\omega_{\omega}^{\text{CK}}$. On the other hand, every real number in $L_{\omega_{\omega}^{\text{CK}}}$ is ITRM-computable. Moreover, there is a procedure for checking whether a real number codes a well-founded relation on an ITRM, and it is easy to check that, for a real number coding a well-ordering of length $\alpha$, this procedure takes at least $\alpha$ many steps. Now, given $\alpha<\omega_{\omega}^{\text{CK}}$, pick an ITRM-computable real code $c$ for $\alpha$. Now run the ITRM-program for computing $c$ and run the well-foundedness check on $c$. This will halt after at least $\alpha$ many steps. Thus, there is an ITRM-clockable ordinals $>\alpha$. Consequently, the ITRM-clockable ordinals are unbounded in $\omega_{\omega}^{\text{CK}}$. By Theorem \ref{ITRM no gaps}, every ordinal $<\omega_{\omega}^{\text{CK}}$ is ITRM-clockable. As we mentioned in the beginning, no other ordinals are ITRM-clockable. 
\end{proof}

\begin{lemma}{\label{OTM ITRM clock simulation}}
	Let $\alpha$ be ITRM-clockable. Then $\alpha$ is OTM-clockable.
\end{lemma}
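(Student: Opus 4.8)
The plan is to simulate an ITRM-computation on an OTM and show that, with a careful implementation, an ITRM-program that halts at time $\alpha+1$ gives rise to an OTM-program that halts at time $\alpha+1$ as well. First I would recall the ITRM-architecture: finitely many registers each holding a natural number, with the usual register operations, and with the limit rule that the content of a register at a limit time is the inferior limit of the earlier contents if that is finite and $0$ (together with a reset or "crash" convention) otherwise. An ITRM-program has only finitely many states and finitely many registers, so a configuration at any time is coded by a tuple of natural numbers together with a state index --- a \emph{finite} amount of information. The natural idea is to devote one $\omega$-block of the OTM-tape to each register, representing the number $n$ in unary (the first $n$ cells set to $1$, the rest to $0$), and to carry out each ITRM-step in finitely many OTM-steps by moving the relevant heads.

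The key steps, in order, are: (1) fix the tape layout (one $\omega$-block per register, plus a few auxiliary blocks of the kind built in the preamble for detecting limit positions and storing flag-bits), and describe how each ITRM-instruction --- zero test, successor, predecessor, copy, conditional jump --- is realized by a finite OTM-subroutine that returns all heads to the left ends of their blocks before the next simulated step begins; (2) check that the OTM-limit rules correctly implement the ITRM-limit rules: at an OTM-limit time $\delta$, the head on each register block is at the liminf of its earlier positions, which is exactly $n$ if the register content was eventually constantly $n$, and is a limit position (hence "not in $\omega$", detectable by the flag construction from the preamble) precisely when the ITRM-register would overflow, so the subroutine can branch to the ITRM-reset behaviour; similarly the OTM inner state at a limit time is the liminf of the earlier program states, matching the ITRM-convention on states; (3) verify the timing: since each simulated ITRM-step costs only finitely many OTM-steps and returns to a "clean" configuration, the $\gamma$th limit of simulated steps is reached by the OTM at its $\gamma$th limit time, so after $\lambda$ ITRM-steps exactly $\lambda$ OTM-steps have elapsed for every limit $\lambda$; (4) conclude that if the ITRM-program halts at time $\alpha+1$ then the simulating OTM halts at time $\alpha+1$, using Lemma \ref{OTM speedup} to absorb any fixed finite discrepancy coming from the finitely many extra OTM-steps spent detecting the halting instruction. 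Combined with Lemma \ref{ITRM clockables}, this yields in particular that $\omega_{n}^{\text{CK}}$ is OTM-clockable for every $n\in\omega$.

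The main obstacle I expect is step (2) together with the fine timing in step (3): one must ensure that the auxiliary bookkeeping (detecting limit OTM-times, recognizing when a register-head sits at a limit position, resetting the simulation heads to the left ends of their blocks) inserts only \emph{finitely many} extra OTM-steps between consecutive simulated steps and, crucially, \emph{no} extra steps are "in progress" across an OTM-limit time, so that the correspondence "simulated step $\gamma$ $\leftrightarrow$ OTM-time $\gamma$" is maintained at limits. This is precisely the kind of synchronization issue already handled for the flag-tapes in the preamble, and the standard fix --- keeping each subroutine bounded in length and leaving the configuration in a canonical form before reading the next instruction --- applies here; one also has to note that the head on the main simulation tape, after a subroutine, sits at a canonical (e.g. leftmost involved) position, so the OTM-liminf rule for the \emph{head} does not disturb the register blocks. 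The remaining subtlety is matching the ITRM "crash on overflow" convention with whatever convention is used in \cite{CFKMNW} and \cite{Koe2} for the definition of ITRM-clockable; since a crash can be simulated by having the OTM-subroutine, upon detecting a limit position in some register block, jump to a dedicated non-halting loop (or to the ITRM-reset, according to the convention in force), this does not affect halting times and hence not clockability.
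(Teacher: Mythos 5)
Your proposal correctly identifies the right general strategy — simulate registers on separate tapes/blocks, represent register contents in unary, and perform each ITRM-step in finitely many OTM-steps — but it contains an internal inconsistency that undermines precisely the step you flag as delicate.

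In step (1) you stipulate that each subroutine ``returns all heads to the left ends of their blocks before the next simulated step begins,'' yet in step (2) you argue that at an OTM-limit time the head on each register block ``is at the liminf of its earlier positions, which is exactly $n$ if the register content was eventually constantly $n$.'' These two statements cannot both hold: if the head is sent back to the left end (position $0$ of its block) at the close of every simulated step, then position $0$ is assumed cofinally often before any limit time, so the liminf of head positions there is $0$, not $n$. The same problem destroys your proposed overflow detection: you suggest recognizing overflow by the head ``sitting at a limit position,'' but if you reset the head to $0$ after each step, it never sits at a limit position, so the test never fires. (Worse, if you abandon the head-position encoding and rely only on tape contents, then at a limit time where the register overflows the tape content is all $1$s, so a left-to-right scan for the first $0$ diverges.) The paper's construction resolves this by doing the opposite of what you propose: after each simulated step the head on a register tape is left parked at the first $0$, i.e.\ at position $n$ where $n$ is the register content, so that the OTM's liminf rule for head position \emph{is} the ITRM's liminf rule for register content; and overflow is detected by a companion tape per register carrying a single $1$ at position $\omega$, whose head moves in lockstep with the register head — reading that $1$ is the overflow signal, obtained in constant time. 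So the fix is not to keep things ``clean'' by resetting the heads, but to let the head position \emph{be} the register content across limits; once you adopt that, your timing analysis in step (3) goes through essentially as you wrote it, and the appeal to the speedup lemma to absorb the final finite overhead is appropriate.
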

\begin{proof}
	Let $P$ be an ITRM-program that clocks $\alpha$. 
	
    The simulation of ITRMs by OTMs here works like this: Use a tape for each register, have $i$ many $1$s, followed by $0$s, on a tape to represent that the respective register contains $i\in\omega$; in addition, after a simulation step is finished, the head position on this tape represents the register content, i.e. it is at the first $0$ on the tape.
	
	For an ITTM, the simulation takes an extra $\omega$ many steps to halt because it takes time to detect an overflow. For an OTM, one can simply use one extra tape for each register, write $1$ to their $\omega$th positions at the start of the computation, move their heads along with the heads on the register simulating tapes and know that there is an overflow as soon as one of the heads on the extra tapes reads a $1$.

\end{proof}

The fact that more tapes are needed the more registers $P$ uses may be seen as a little defect. (Note that, by the results of \cite{Koe2}, the halting times of ITRM-programs using $n$ registers are bounded by $\omega_{n+1}^{\text{CK}}$ so that indeed arbitrarily large numbers of registers - and thus of tapes - are required to make the above construction work for all $\alpha_{n}^{\text{CK}}$ with $n\in\omega$.) It would certainly be nicer to have a uniform bound on the number of required tapes. This is indeed possible:

\begin{corollary}{\label{OTM ITRM subtle clock simulation}}
	Let $\alpha$ be ITRM-clockable. Then $\alpha$ is OTM-clockable by an OTM-program that uses three tapes.
\end{corollary}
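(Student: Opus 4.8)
The plan is to upgrade the simulation in Lemma \ref{OTM ITRM clock simulation} so that the number of tapes no longer depends on the number $k$ of registers used by the ITRM-program $P$. The core idea is to pack all $k$ register contents, together with their overflow flags, onto a single work tape by using the first $\omega\cdot k$ cells, grouped into $k$ consecutive $\omega$-blocks, the $j$-th block holding a unary code for the $j$-th register (a block of $1$s followed by $0$s, with a single $1$ placed at the $\omega$-th cell of each block as an overflow sentinel, exactly as in the original proof). A second tape is used as a scratch/marker tape for arithmetic on individual blocks (copying a block, incrementing, decrementing, zeroing, comparing two blocks for the conditional-jump instructions), and the third tape is the ``clocking'' tape carrying the alternating $01$-flag that lets us detect limit stages and run the construction of Lemma \ref{OTM speedup}-style head-position bookkeeping; this third tape also absorbs the constant amount of extra state-management needed so that the simulation inserts only finitely many steps per simulated step.

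The key steps, in order, are: (i) describe the layout on tape $T_0$ and how to navigate to the $j$-th block in $O(j)$ head moves using the marker tape $T_1$; (ii) implement each of the finitely many ITRM-instruction types as an OTM-subroutine that touches only $T_0$ and $T_1$ and returns all heads to canonical positions (first $0$ of the relevant block, respectively cell $0$ of $T_1$) when done, so that the simulation behaves correctly at limit stages by the liminf rules; (iii) verify the limit behaviour: at a limit time $\lambda$, the contents of each block on $T_0$ are the liminf of the earlier contents, which --- since each register value is the liminf of its earlier values for a converging ITRM-computation --- correctly reflects the ITRM-configuration at $\lambda$ (and a genuine overflow is detected exactly when some block's sentinel $1$ at position $\omega\cdot(j-1)+\omega$ has been reached, i.e.\ read, cofinally, which the head-position tracking of the preliminary constructions makes available); (iv) check the timing: since the simulation of a single ITRM-step takes only finitely many OTM-steps, and $\alpha$ is exponentially closed (hence closed under the relevant arithmetic), running this simulation of $P$ halts in exactly $\alpha$ many steps --- and then, as in Lemma \ref{OTM multitape simulation} and Lemma \ref{OTM ITRM clock simulation}, at most finitely many further steps suffice to detect the halting configuration, which by the speedup Lemma \ref{OTM speedup} can be absorbed so that the final program halts at time $\alpha+1$.

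The main obstacle I expect is the limit-stage analysis in step (iii): when we simulate one ITRM-step by several OTM-steps, an ITRM-step boundary need not land on an OTM-limit ordinal, so at an OTM-limit time $\lambda$ we may be in the middle of a subroutine. We must arrange the subroutines so that the partial-configuration liminf at $\lambda$ is still recoverable --- the standard fix is to keep, on tape $T_1$, a ``currently committing'' buffer so that a block on $T_0$ is only ever overwritten by a monotone sweep (first extend with $1$s, then truncate with $0$s, or vice versa) whose liminf equals either the old or the new value, never some spurious intermediate, and to keep the subroutine's internal phase encoded in the inner state in a way that converges correctly (the same device already used in the speedup preliminaries for the ``first $n$ cells'' bookkeeping). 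A secondary subtlety is ensuring that navigating to block $j$ does not itself cause head positions to oscillate in a way that corrupts the limit: since $j<\omega$ and each navigation is a finite excursion returning to a canonical cell, the head position on $T_0$ has a well-defined liminf at every limit stage, and one checks it is the intended canonical position. Once these bookkeeping points are handled, the count of tapes is manifestly three, independent of $k$, which is exactly the assertion of the corollary.
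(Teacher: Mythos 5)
Your layout (all registers packed into one tape as $\omega$-blocks, a scratch tape, a clocking tape) is a genuinely different decomposition from the paper's, and the central claim that makes it run — namely that ``at most finitely many further steps suffice to detect the halting configuration'' at time $\alpha$, which you then propose to absorb via Lemma~\ref{OTM speedup} — does not hold, and this is exactly the problem the paper's proof is designed to avoid. Once the registers are stored in $\omega$-blocks on a single tape, moving the head from one block to another is an $\omega$-length excursion, not a finite one; so already a single simulated ITRM-step (say, a copy instruction $R_i := R_j$ with $i\neq j$) costs on the order of $\omega$ OTM-steps rather than finitely many, and, more importantly, at the limit time $\alpha$ the head on the register tape sits at some liminf position from which reaching and scanning the block that governs halting again costs at least $\omega$ further steps. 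Lemma~\ref{OTM speedup} only removes a finite tail, and its proof itself introduces several auxiliary tapes, so invoking it both fails to cover an $\omega$-length overhang and would break the three-tape budget you are trying to certify.

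The paper's way around this is to make the halting-relevant data readable \emph{instantaneously} at the limit, without any navigation: it keeps one tape for the register contents as you do, but devotes the second tape to the active program line encoded purely by head position (so the liminf rule automatically produces the correct program line at a limit), and the third tape to the single register on whose zero/nonzero status the halting instruction conditions, again encoded by head position with sentinel $1$s at cells $0$ and $\omega$ so that ``head reads $1$'' means ``that register is $0$ (by liminf or by overflow reset).'' With this arrangement the OTM recognises the halting configuration at time $\alpha$ by inspecting what its heads currently read, with no search. Your proposal is missing precisely this mechanism: neither the scratch tape nor the $01$-flag tape carries the program line or the critical register's status in a form that the liminf rule delivers for free, and the inner state alone is not set up so that its liminf yields the ITRM's active line. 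Without one of these devices, your simulation cannot halt at time $\alpha+1$, and the corollary is not established. The monotone-sweep discipline you describe for keeping block contents liminf-correct is a reasonable ingredient, but it addresses the wrong obstacle; the real obstacle is read-out latency at the limit, and the paper's position-encoding trick is the step you need.
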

\begin{proof}
We can reduce this to three tapes: One simulates all ITRM-registers as described above. One represents the program line, but not via the tape content, but via the head position: Head on $i$ means $i$ is the active program line. Automatically works at limits. 
ITRM halts depends on active program line and the content of a certain register, namely whether that register contains $0$ or $1$. If the program halts, it is clear on which register this depends; say wlog it is the first. Extra tape on which the content of the first register is represented via the head position. This tape has $0$ everywhere, expect at $0$ and $\omega$, where we have $1$ (write this at the start of the computation). Now, reading a $1$ on this tape means the register contains $0$ (either as a liminf or due to an overflow - that would be the $1$ on position $\omega$), reading $0$ means that the register contains a positive number. Head on this tape is moved along with the register content, when $1$ is read at a limit time (can be determined with a flag on two extra tapes) move it one place to the left, this simulates the reset in the case of an overflow.
\end{proof}


\begin{corollary}{\label{admissible clockables}}
	For every $n\in\omega$, $\omega_{n}^{\text{CK}}$ is OTM-clockable.
\end{corollary}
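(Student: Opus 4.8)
The plan is to obtain this as an immediate consequence of the two preceding results, so the proof itself will be essentially a one-line combination. First I would invoke Lemma \ref{ITRM clockables}, which tells us that every ordinal below $\omega_{\omega}^{\text{CK}}$ is ITRM-clockable; in particular, since $\omega_{n}^{\text{CK}}<\omega_{\omega}^{\text{CK}}$ for every $n\in\omega$, each $\omega_{n}^{\text{CK}}$ is ITRM-clockable. Then I would apply Lemma \ref{OTM ITRM clock simulation} (or, if one wants a fixed number of tapes, Corollary \ref{OTM ITRM subtle clock simulation}), which transfers ITRM-clockability to OTM-clockability, to conclude that $\omega_{n}^{\text{CK}}$ is OTM-clockable for every $n\in\omega$.

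The substantive content has already been discharged: the arithmetic identification of the ITRM-clockable ordinals with $\{\alpha:\alpha<\omega_{\omega}^{\text{CK}}\}$ rests on Theorem \ref{ITRM no gaps} together with Koepke's bound on ITRM-halting times from \cite{Koe2}, and the simulation direction rests on the overflow-detection trick (one extra tape per register, or the three-tape refinement) described in the proofs of Lemma \ref{OTM ITRM clock simulation} and Corollary \ref{OTM ITRM subtle clock simulation}. So there is no remaining obstacle; the only thing to be careful about is to cite the right lemma for whichever tape-count one wishes to emphasize. If one wants the stronger statement that a single OTM-architecture (three tapes) clocks all the $\omega_{n}^{\text{CK}}$ simultaneously as $n$ ranges over $\omega$, one should cite Corollary \ref{OTM ITRM subtle clock simulation} rather than Lemma \ref{OTM ITRM clock simulation}, since the latter uses a number of tapes growing with the number of simulated registers, which by the bounds of \cite{Koe2} grows with $n$.

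I would also add a brief closing remark tying this back to the introduction: since every $\omega_{n}^{\text{CK}}$ is admissible, this shows that admissible ordinals can be OTM-clockable, contradicting the statement recorded in \cite{BIWOC} and answering Reitz's question in the positive. No further calculation is needed.
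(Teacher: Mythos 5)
Your proposal is correct and matches the paper's (implicit) argument exactly: the corollary is indeed the immediate combination of Lemma \ref{ITRM clockables} and Lemma \ref{OTM ITRM clock simulation} (or Corollary \ref{OTM ITRM subtle clock simulation} for a fixed tape count), which is why the paper states it without further proof.
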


This answers the first question mentioned above in the positive. By a relativization of the above argument, we can achieve the same for the second (i.e. whether gap starters for OTMs are something "better" than admissible):

\begin{thm}{\label{successor admissible do not start gaps}}
	Let $\alpha=\beta^{+}$ be a successor admissible. Then $\alpha$ does not start an OTM-clockable gap.
\end{thm}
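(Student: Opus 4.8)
\textbf{Proof proposal for Theorem \ref{successor admissible do not start gaps}.}

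The plan is to show that $\alpha = \beta^{+}$ cannot start a gap by producing, for cofinally many $\delta < \alpha$, an OTM-clockable ordinal between $\delta$ and $\alpha$; since we already know (Corollary \ref{OTM gap starters admissible}) that a gap starter is admissible and (Proposition \ref{below sigma}, etc.) that clockable ordinals are unbounded below $\sigma$, the real content is to rule out that $\alpha$ itself is the first non-clockable point of a gap. The key observation is that $\beta < \alpha$ is admissible, hence by relativizing Lemma \ref{ITRM clockables} we have that, relative to any real coding $L_{\beta}$, the ordinals $\omega_{n}^{L_{\beta}[x]}$-analogues (the ITRM-clockable ordinals in the oracle $x$) are unbounded in $\beta^{+} = \alpha$; equivalently, since $\alpha$ is the next admissible after $\beta$, the ITRM-clockable-in-$x$ ordinals for $x$ a code of $L_{\beta}$ are cofinal in $\alpha$. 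First I would fix an OTM-computable way to write, in $< \beta'$ many steps (Lemma \ref{very quick writing} applied at a clockable $\gamma$ with $\beta < \gamma < \alpha$, using that $\beta$ is not itself clockable only if it lies in an earlier gap — but in any case there are clockables cofinal below $\beta$, so we can get a code for $L_{\gamma}$ hence for $L_{\beta}$), a real code $x$ for $L_{\beta}$.

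Next, with $x$ in hand, I would invoke the relativized no-gaps theorem for ITRMs: the ordinals clockable by an ITRM with oracle $x$ have no gaps and are cofinal in $\alpha$ (because $\alpha = \beta^{+}$ and $x$ codes $L_{\beta}$, so $\alpha$ is the least $x$-admissible above $\beta$, and the $x$-relativized version of Lemma \ref{ITRM clockables} puts the $x$-ITRM-clockable ordinals cofinally below it). Then I would run the OTM-simulation of ITRMs from Corollary \ref{OTM ITRM subtle clock simulation}, but in the oracle form: three tapes for the ITRM simulation plus the tape carrying $x$; the overflow-detection trick still works verbatim. The upshot is that every $x$-ITRM-clockable ordinal is OTM-clockable (after first spending $< \beta' \le \alpha$ steps writing $x$, which does not disturb the order type of the total running time because we then proceed to clock a longer ordinal). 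Hence there are OTM-clockable ordinals cofinal below $\alpha$, with $\alpha$ itself a limit of them; but then by definition $\alpha$ cannot start a gap, since a gap starter is the point immediately after which clockables resume, and here clockables are cofinal \emph{below} $\alpha$ and we must also check $\alpha$ itself is not in the interior — which is automatic since if $\alpha$ started a gap it would be admissible (fine) but the construction shows clockables are dense up to $\alpha$, and moreover a short additional argument (run the enumeration of $L$ and "clock along" as in Lemma \ref{very quick writing}) shows $\alpha$ is not in a gap at all. Actually the cleanest finish: if $\alpha$ started a gap $[\alpha,\eta)$, then in particular $\alpha$ is not clockable, yet we can clock ordinals $< \alpha$ arbitrarily close to $\alpha$; that is consistent with $\alpha$ starting a gap, so I instead argue that $\alpha = \beta^{+}$ is itself clockable: take a code for $L_{\beta}$, run the $x$-ITRM no-halting-program-check that diagonalizes against all $x$-ITRM halting times (the analogue of Proposition \ref{OTM gap existence}), which by the relativized no-gaps theorem halts exactly at the supremum of $x$-ITRM-clockables, namely $\alpha$ — giving $\alpha$ itself OTM-clockable and hence not a gap starter.

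The main obstacle I anticipate is bookkeeping the "write $x$ first, then clock" composition so that the total halting time is exactly the target ordinal and not the target plus the write-time: because clockable ordinals are closed under addition and the write-time $< \beta' \le \alpha$ while the target ITRM-clockable ordinal $\tau$ satisfies $\beta < \tau < \alpha$, one needs $\tau$ to absorb the prefix, i.e. one should use that for $\tau$ exponentially (or just additively) closed and $> \beta'$ one has $(\text{write-time}) + \tau = \tau$; arranging that the relevant $x$-ITRM-clockable ordinals are cofinal \emph{and} closed enough requires one more application of closure of clockables under ordinal arithmetic, applied in the oracle setting. The second delicate point is justifying the relativized versions of Theorem \ref{ITRM no gaps} and Lemma \ref{ITRM clockables} — these go through because the proofs in \cite{CFKMNW} and \cite{Koe2} relativize to an oracle real without change, replacing $\omega_{n}^{\text{CK}}$ by $\omega_{n}^{\text{CK},x}$ and $L$ by $L[x]$; since $x$ codes $L_{\beta}$ and $\alpha = \beta^{+}$ is the least admissible above $\beta$, we have $\omega_{1}^{\text{CK},x} \le \alpha$ and in fact $\sup_{n} \omega_{n}^{\text{CK},x} = \alpha$ exactly when $\alpha$ is not itself a limit of admissibles relative to $x$ — which holds since $\alpha = \beta^{+}$. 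This last equality is really the crux and is where the hypothesis "$\alpha$ is a \emph{successor} admissible" is used in an essential way.
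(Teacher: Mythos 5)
Your overall direction is the right one --- write a real code by an OTM, relativize the ITRM clockability results to that oracle, and simulate --- but you never land on the clean fact that makes the proof work, and the ``cleanest finish'' you settle on contains two genuine errors.

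First, the claim ``$\sup_{n}\omega_{n}^{\text{CK},x}=\alpha$ exactly when $\alpha$ is not a limit of admissibles relative to $x$ --- which holds since $\alpha=\beta^{+}$'' is false. If $x$ codes $L_{\beta}$ (or codes any ordinal $\gamma\in[\beta,\alpha)$), then $\omega_{1}^{\text{CK},x}=\alpha$, but $\omega_{2}^{\text{CK},x},\omega_{3}^{\text{CK},x},\dots$ are the next admissibles \emph{above} $\alpha$ in $L[x]$, so $\sup_{n}\omega_{n}^{\text{CK},x}=\omega_{\omega}^{\text{CK},x}>\alpha$. The relativized Lemma \ref{ITRM clockables} therefore says the $x$-ITRM-clockable ordinals are exactly those $<\omega_{\omega}^{\text{CK},x}$, a set that contains $\alpha$ and much more; they are not merely ``cofinal in $\alpha$.'' This mis-identification is what pushes you toward a diagonalization you do not need, and it makes your ``crux'' sentence incorrect.

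Second, the diagonalization finish does not deliver what you want. The program from Proposition \ref{OTM gap existence} halts at the first ``level'' at which no simulated program halts; for ITRMs, which by Theorem \ref{ITRM no gaps} have no gaps, that argument is precisely a \emph{reductio} showing such a program cannot halt after all halting times, not a device that clocks the supremum. And in any case the supremum of the $x$-ITRM-clockables is $\omega_{\omega}^{\text{CK},x}$, not $\alpha$, so even if it halted there you would have overshot.

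What you actually need is much simpler and is in your reach: since $\alpha=\beta^{+}$ and $c$ codes an ordinal $\gamma\in(\beta,\alpha)$ (such a clockable $\gamma$ exists because $\alpha$ starts a gap, and $c$ is OTM-writable in $<\alpha$ steps by Lemma \ref{very quick writing}), we get $\omega_{1}^{\text{CK},c}=\alpha$. The relativized Lemma \ref{ITRM clockables} then gives directly that $\alpha=\omega_{1}^{\text{CK},c}$ is $c$-ITRM-clockable --- no diagonalization, no appeal to the sup. Composing ``write $c$'' with the $c$-oracle OTM-simulation of that ITRM (the relativized Corollary \ref{OTM ITRM subtle clock simulation}) clocks $\alpha$ itself, since the prefix has length $<\alpha$ and $\alpha$ is additively closed. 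Your bookkeeping worry about absorbing the prefix is real for sub-$\alpha$ targets, but vanishes once the target is $\alpha$ itself. So the correct fix is to drop the cofinality/diagonalization detour and just clock $\alpha$ directly as $\omega_{1}^{\text{CK},c}$.
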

\begin{proof}
	Suppose for a contradiction that $\alpha=\beta^{+}$ starts an OTM-clockable gap. Then there is an OTM-clockable ordinal $\gamma\in(\beta,\alpha)$; pick one. By Lemma \ref{very quick writing} above, a real code $c$ for $\gamma$ is OTM-writable in $<\alpha$ many steps. Suppose $c$ has been written. Then $\omega_{1}^{\text{CK},c}=\alpha$. Thus, $\alpha$ is ITRM-clockable in the oracle $c$. But now, $\alpha$ is OTM-clockable by first writing $c$ and the ITRM-clocking $\alpha$ relative to $c$, a contradiction to the assumption that $\alpha$ starts a gap.
\end{proof}


\begin{corollary}
	Every gap-starting ordinal for OTMs is an admissible limit of admissible ordinals.
\end{corollary}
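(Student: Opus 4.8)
The plan is to combine the two preceding results about gap starters: Corollary \ref{OTM gap starters admissible}, which tells us that a gap-starting ordinal $\alpha$ is admissible, and Theorem \ref{successor admissible do not start gaps}, which tells us that $\alpha$ cannot be a successor admissible, i.e. cannot be of the form $\beta^+$ for any $\beta$. So the only task remaining is to deduce from ``$\alpha$ admissible and not a successor admissible'' that $\alpha$ is in fact an admissible \emph{limit} of admissible ordinals, and in particular that there are admissibles cofinal in $\alpha$.

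First I would record that, since $\alpha$ starts a gap, it is in particular admissible by Corollary \ref{OTM gap starters admissible}. Next I would argue that $\alpha$ is a limit of admissible ordinals. Suppose not; then the set of admissibles below $\alpha$ is bounded, so it has a supremum $\beta<\alpha$. One checks that this supremum $\beta$ is itself admissible (a supremum of admissibles is admissible, or $\beta$ is one of finitely many admissibles below $\alpha$ so is literally the largest one), and then $\alpha$, being admissible with no admissible strictly between $\beta$ and $\alpha$, is precisely the next admissible after $\beta$, i.e. $\alpha=\beta^+$. But Theorem \ref{successor admissible do not start gaps} says that no successor admissible starts an OTM-clockable gap, contradicting the choice of $\alpha$. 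Hence $\alpha$ must be a limit of admissibles, and since it is also admissible, it is an admissible limit of admissible ordinals.

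I expect the only mildly delicate point to be the claim that the supremum $\beta$ of the admissibles below a non-limit-of-admissibles $\alpha$ is itself admissible: if there are cofinally many admissibles below $\beta$ one uses that a limit of admissibles is admissible, and otherwise $\beta$ is already among the admissibles below $\alpha$, so it is the largest admissible $<\alpha$ and hence $\alpha=\beta^+$ directly. Either way we land on $\alpha$ being a successor admissible, which is what Theorem \ref{successor admissible do not start gaps} forbids. This is entirely standard admissibility bookkeeping, so the corollary is really just an immediate combination of the two earlier statements; no new computational ideas are needed.

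\begin{proof}
	Let $\alpha$ start a gap in the OTM-clockable ordinals. By Corollary \ref{OTM gap starters admissible}, $\alpha$ is admissible. It remains to show that $\alpha$ is a limit of admissible ordinals. Suppose otherwise. Then the admissibles below $\alpha$ are bounded in $\alpha$; let $\beta$ be their supremum, so $\beta<\alpha$. If there are cofinally many admissibles below $\beta$, then $\beta$ is a limit of admissibles and hence itself admissible; otherwise $\beta$ is the largest admissible strictly below $\alpha$ and in particular admissible. In either case $\beta$ is admissible, and there is no admissible ordinal in the interval $(\beta,\alpha)$, so $\alpha$ is the next admissible after $\beta$, i.e. $\alpha=\beta^{+}$. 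By Theorem \ref{successor admissible do not start gaps}, $\alpha=\beta^{+}$ does not start an OTM-clockable gap, contradicting our assumption. Hence $\alpha$ is a limit of admissible ordinals, and being admissible as well, it is an admissible limit of admissible ordinals.
\end{proof}
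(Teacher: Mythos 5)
Your overall strategy—combine Corollary \ref{OTM gap starters admissible} (gap starters are admissible) with Theorem \ref{successor admissible do not start gaps}—is exactly what the paper intends, and the structure of your argument is right. However, you rely on the claim that \emph{a limit of admissible ordinals is itself admissible}, and this is false. The standard counterexample is $\omega_{\omega}^{\mathrm{CK}}=\sup_{n}\omega_{n}^{\mathrm{CK}}$: the map $n\mapsto\omega_{n}^{\mathrm{CK}}$ is $\Sigma_{1}$-definable over $L_{\omega_{\omega}^{\mathrm{CK}}}$ and cofinal, so $\omega_{\omega}^{\mathrm{CK}}$ is not admissible even though it is a limit of admissibles. (This is precisely why the conclusion of the corollary is phrased as ``\emph{admissible} limit of admissibles,'' i.e.\ recursively inaccessible, rather than just ``limit of admissibles.'') So your case split, in which you argue that $\beta=\sup\{\gamma<\alpha:\gamma\text{ admissible}\}$ must itself be admissible, is not sound: $\beta$ may well be a non-admissible limit of admissibles.

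The good news is that the false step is not actually needed. What Theorem \ref{successor admissible do not start gaps} and its proof really use is only that $\alpha$ is the \emph{least admissible strictly above some $\beta<\alpha$} — that is, that there are no admissibles in $(\beta,\alpha)$ — together with the existence of a clockable $\gamma\in(\beta,\alpha)$. The computation $\omega_{1}^{\mathrm{CK},c}=\alpha$ in that proof only needs that $c$ codes an ordinal $>\beta$ (so $\omega_{1}^{\mathrm{CK},c}>\beta$, hence $\geq\alpha$ since there is no admissible in between) and $c\in L_{\alpha}$ with $\alpha$ admissible (so $\omega_{1}^{\mathrm{CK},c}\leq\alpha$). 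Nowhere is admissibility of $\beta$ invoked. So you should simply delete the attempt to show $\beta$ admissible: once you know the admissibles below $\alpha$ are bounded by some $\beta<\alpha$, it already follows that $\alpha$ is the next admissible above $\beta$, and the argument of Theorem \ref{successor admissible do not start gaps} (read with $\beta^{+}$ meaning ``least admissible $>\beta$'' for arbitrary $\beta$) gives the contradiction.
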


\section{Conclusion and further work}

We showed that OTM-gaps are always started by limits of admissible ordinals and that, while admissible ordinals can be OTM-clockable, $\Sigma_{2}$-admissible ordinals cannot. This provokes the following questions:

\bigskip
\textbf{Question}: Is every gap-starting ordinal for OTMs $\Sigma_{2}$-admissible?\footnote{Note that Welch's argument that ITTM-gaps are always started by admissible ordinals does not seem to help, as it uses the upwards absoluteness of $\Sigma_{1}$, which $\Sigma_{2}$ does not enjoy. In particular, a $\Sigma_{2}$-formula could define different total and cofinal functions over different $L_{\alpha}$s. We conjecture that the techniques used in the construction of the $\Sigma_{2}$-machine by Friedman and Welch in \cite{BIWOC} and generalized in \cite{COW} could be of help here.}


\bigskip
\textbf{Question}: What is the minimal gap-starting ordinal for OTMs? Does it coincide with first $\Sigma_{2}$-admissible ordinal?

\bigskip

Further worthile topics include clockability for OTMs with a fixed ordinal parameter $\alpha$ and for other models of computability, like $\alpha$-ITTMs or $\alpha$-ITRMs (see \cite{COW}) or the "hypermachines" of Friedman and Welch (see \cite{FW}).

\end{document}